 \newtheorem{theorem}{Theorem}[section]
 \newtheorem{lemma}[theorem]{Lemma}
 \newtheorem{proposition}[theorem]{Proposition}
 \theoremstyle{definition}
 \newtheorem{definition}[theorem]{Definition}
 \theoremstyle{remark}
 \newtheorem{rem}[theorem]{Remark}
 \newtheorem{example}[theorem]{Example}
 \numberwithin{equation}{section}
\newcommand{\RR}{\mathbb{R}}
\begin{document}
\title[$\mathcal{P}\mathcal{R}$-anti-slant warped product submanifold]
{$\mathcal{P}\mathcal{R}$-anti-slant warped product submanifold of a nearly paracosymplectic manifold}
\author[S. K. Srivastava, A. Sharma] {S. K. Srivastava, A. Sharma}
\address{Department of Mathematics, Central University of Himachal Pradesh , Dharamshala-176215, Himachal Pradesh
	       INDIA}
\email{sachin@cuhimachal.ac.in, anilsharma3091991@gmail.com}
\author{S. K. Tiwari}
\address{Department of Applied science,  Ajay Kumar Garg Engg. College, Ghaziabad-201009,  Uttar Pradesh
	       INDIA}
\email{drstiwari.ims@gmail.com}
\begin{abstract}
In this paper, we study $\mathcal{P}\mathcal{R}$-anti-slant warped product submanifold of a nearly paracosymplectic manifold $\widetilde{M}$. The necessary and sufficient condition is obtained for the distributions allied to the characterization of a $\mathcal{P}\mathcal{R}$-anti-slant submanifold being integrable and totally geodesic foliation. In addition, we have defined $\mathcal{P}\mathcal{R}$-anti-slant warped product submanifold of $\widetilde{M}$ and gave some illustrations. Finally, we extracted the constraints for a submanifold of $\widetilde{M}$ to be a $\mathcal{P}\mathcal{R}$-anti-slant warped product of the form $F\times_{f}N_{\lambda}$.
\end{abstract}
\subjclass{53B25, 53B30, 53C25, 53D15}
\keywords{Warped product, Paracontact manifold, Anti-slant submanifold}
\thanks{S. K. Srivastava: partially supported through the UGC-BSR Start-Up-Grant vide their letter no. F.30-29/2014(BSR). A. Sharma: supported by Central University of Himachal Pradesh through the Research fellowship for Ph.D.} 
\maketitle
\section{Introduction}\label{intro}
\noindent The warped product submanifolds of a pseudo-Reimannian manifold served as a fruitful platform in exploring, identifying and to solve problems in mathematical physics, especially in different models of spacetime, black holes, Ricci flow and Hamiltonian spaces (c.f.,\cite{HM,JC,PVK,HT}). In \cite{OB}, Bishop and O'Neill initiated the theory of warped product submanifold as a generalization of pseudo-Reimannian product manifolds. The study has attained momentum when Chen \cite{BY} introduced the geometric depiction of warped product CR-submanifolds in Kahlerian manifold $\widetilde{N}$ through differential point of view and proved the non-existence of proper warped product CR-submanifolds in the form $N_{\bot}\times {}_fN_{T}$ such that $N_{T}$ is a holomorphic submanifold and $N_{\bot}$ is a totally real submanifold of $\widetilde{N}$.

Apart from warped product submanifolds, there is a major generalized class of both holomorphic and totally real submanifolds, called slant submanifolds. Chen begun the concept of slant submanifolds in complex geometry \cite{BC}. Later on, slant and semi-slant submanifolds in contact Riemannian geometry are studied by Lotta and Cabrerizo, respectively \cite{LC, LC1, AL}. This geometric setting may not found suitable in mathematical physics particularly, in the theory of space time and black hole, where the metric is not necessarily positive definite. Thus, the geometry of slant submanifolds with indefinite metric became a topic of investigation, and Sahin gave the notion of slant lightlike submanifolds of indefinite Hermitian manifolds \cite{BS}. Since then several geometers have contributed many important characterization to the warped product slant submanifolds in almost contact, complex and lorentzian manifolds (c.f.,\cite{PA, BYC, BS1, UDS}). Recently, Chen-Munteanu, brought our attention to the geometry of $\mathcal{P}\mathcal{R}$-warped products in para-K\"{a}hler manifolds \cite{CPR}. Motivated by the work of \cite{CPR}, the authors have studied $\mathcal{P}\mathcal{R}$-warped product in paracontact manifold which can be viewed as the counterpart of para- K\"{a}hler manifold \cite{SA}. 

This paper is organized as follows. In Sect.\ref{pre}, the basic informations about almost paracontact metric manifolds, nearly paracosymplectic manifold and slant submanifold are given. Sect.\ref{pranti-sub}, concerned with $\mathcal{P}\mathcal{R}$-anti-slant submanifold of a nearly paracosymplectic manifold $\widetilde{M}$. The integrability and totally geodesic foliation conditions for the distributions involved with the definition are drawn. In Sect.\ref{prwanti} we define $\mathcal{P}\mathcal{R}$-anti-slant warped product submanifold $M$ and investigate the existence and nonexistence results for such submanifolds  of $\widetilde{M}$. Further, we give some examples of a $\mathcal{P}\mathcal{R}$-anti-slant warped product submanifold $F\times_{f}N_{\lambda}$, where $F$ is anti-invariant and $N_{\lambda}$ is proper slant submanifold of a nearly paracosymplectic manifold $\widetilde{M}$. Finally, we derive the necessary and sufficient condition for a submanifold of $\widetilde{M}$ to be a $\mathcal{P}\mathcal{R}$-anti-slant warped product of the form $F\times_{f}N_{\lambda}$.
\section{Preliminaries}\label{pre}
\subsection{Almost paracontact metric manifolds}
A $(2n + 1)$-dimensional smooth manifold $\widetilde{M}$ is said to have  an  almost paracontact structure $(\varphi ,\xi ,\eta )$, if there exist on $\widetilde{M}$ a tensor field $\varphi$ of type $(1, 1)$, a vector field $\xi$, and a $1$-form $\eta$  satisfying 
\begin{align}
  &\varphi ^{2} =Id-\eta \otimes \xi ,\quad \eta (\xi )=1\label{phieta}
\end{align}
where $Id$ is the identity transformation and the tensor field $\varphi$ induces an almost paracomplex structure on the distribution $D = \ker(\eta$), that the eigen distributions $D^{\pm}$ corresponding to the eigenvalues $\pm1$ have equal dimensions $n$.
From the equation \eqref{phieta}, it can be easily deduced that 
\begin{align}\label{phixi}
\varphi\xi = 0, \quad \eta\circ\varphi = 0 \quad {\rm and \quad rank}(\varphi)=2n.
\end{align}
If the manifold $\widetilde{M}$ has an almost paracontact structure $(\varphi ,\xi ,\eta )$ then we can find a non-degenerate pseudo-Riemannian metric $g$ on  $\widetilde{M}$ such that 
\begin{align}
g\left(X,Y\right)=-g(\varphi X,\varphi Y)+\eta (X)\eta (Y),\label{metric}
\end{align}
where signature of $g$ is necessarily $(n+1,\,n)$ for any vector fields $X$ and $Y$; then the quadruple $(\varphi,\xi,\eta,g)$ is called an almost paracontact metric structure and the manifold $\widetilde{M}$ equipped with paracontact metric structure is called an almost paracontact metric manifold \cite{SK,ZA}. With respect to $g$, $\eta$ is metrically dual to $\xi$, that is
\begin{align}\label{geta}
g(X,\xi)=\eta(X). 
\end{align}
With the consequences of Eqs. \eqref{phieta}, \eqref{phixi} and \eqref{metric} we deduce
\begin{align}\label{antisymphi}
g(\varphi X,Y)=-g(X,\varphi Y),
\end{align}
for any $X,Y \in \Gamma(T\widetilde{M})$. Here $\Gamma(T\widetilde{M})$ is the tangent bundle of $\widetilde{M}$. 
Finally, the fundamental $2$-form $\Phi$ on $\widetilde{M}$ is given by
 \begin{align}\label{PHI} 
 g(X,\varphi Y)=\Phi(X,Y). 
 \end{align}
\noindent
\begin{definition} 
For all $X,Y \in \Gamma(T\widetilde{M})$ an almost paracontact metric manifold $(\widetilde{M},\varphi,\xi,\eta,g)$ is called (\cite{PD, ES, IK}) 
 \begin{itemize}
 \item[$\bullet$] nearly para Sasakian if 
  $$(\widetilde{\nabla}_X \varphi)Y+(\widetilde{\nabla}_Y \varphi)X=2g(X,Y)\xi+(\eta(X)Y+\eta(Y)X)$$
  where, $\widetilde{\nabla }$ is  Levi-Civita connection on $\widetilde{M}$.
 \item[$\bullet$] paracosymplectic if the forms $\eta$ and $\varphi$ are parallel with respect to the Levi-Civita connection $\widetilde{\nabla }$ on $\widetilde{M}$, i.e., 
$$\widetilde{\nabla }\eta=0\quad{\rm and}\quad \widetilde{\nabla }\varphi=0.$$
\item[$\bullet$] nearly paracosymplectic if $\varphi$ is killing, i.e.,
\begin{align}\label{npcos}
(\widetilde{\nabla}_X \varphi)Y+(\widetilde{\nabla}_Y\varphi)X=0
\end{align} 
or equivalently,
\begin{align}\label{nabxx}
(\widetilde{\nabla}_X \varphi)X=0.
\end{align}
\end{itemize}
\end{definition}
\noindent Now, for $X, Y \in \Gamma(T\widetilde M)$, $N \in \Gamma(T\widetilde M)^{\bot}$ and by the property of Levi-Civita connection $\widetilde{\nabla }$ on $\widetilde{M}$, we have
\begin{align}\label{cplevi}
g(\widetilde{\nabla}_{X}Y,N)=-g(\widetilde{\nabla }_{X}N,Y)
\end{align}
here, $\Gamma(T\widetilde M)^{\bot}$ denotes the set of vector fields normal to $\widetilde{M}$.
Following Blair \cite{BD} we prove: 
\begin{proposition}\label{killing}
On a nearly paracosymplectic manifold the vector field $\xi$ is killing.
\end{proposition}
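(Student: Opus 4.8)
The plan is to show that the $(1,1)$-tensor $X\mapsto\widetilde{\nabla}_X\xi$ is skew-symmetric with respect to $g$, since $g(\widetilde{\nabla}_X\xi,Y)+g(\widetilde{\nabla}_Y\xi,X)=0$ for all $X,Y\in\Gamma(T\widetilde{M})$ is exactly the condition that $\xi$ be Killing. The whole argument rests on rewriting $\widetilde{\nabla}_X\xi$ through the covariant derivative of $\varphi$ and then feeding in the nearly paracosymplectic condition \eqref{npcos}. As a preliminary I would record a fact valid on any almost paracontact metric manifold: for each fixed $X$ the operator $(\widetilde{\nabla}_X\varphi)$ is skew-adjoint, i.e. $g((\widetilde{\nabla}_X\varphi)Y,Z)=-g(Y,(\widetilde{\nabla}_X\varphi)Z)$. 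This follows by differentiating the antisymmetry relation \eqref{antisymphi} and using that $\widetilde{\nabla}$ is metric, the terms containing $\widetilde{\nabla}_XY$ and $\widetilde{\nabla}_XZ$ cancelling in pairs.

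Next I would express $\widetilde{\nabla}_X\xi$ in terms of $\varphi$. Differentiating $\varphi\xi=0$ from \eqref{phixi} gives $\varphi(\widetilde{\nabla}_X\xi)=-(\widetilde{\nabla}_X\varphi)\xi$. Since $g(\xi,\xi)=\eta(\xi)=1$ is constant by \eqref{geta} and \eqref{phieta}, we have $\eta(\widetilde{\nabla}_X\xi)=0$, so applying $\varphi$ once more and invoking $\varphi^2=Id-\eta\otimes\xi$ yields $\widetilde{\nabla}_X\xi=-\varphi\big((\widetilde{\nabla}_X\varphi)\xi\big)$. Pairing this against $Y$, using the skew-adjointness of $\varphi$ from \eqref{antisymphi}, and then the nearly paracosymplectic identity \eqref{npcos} with $Y=\xi$, namely $(\widetilde{\nabla}_X\varphi)\xi=-(\widetilde{\nabla}_\xi\varphi)X$, I obtain the compact formula $g(\widetilde{\nabla}_X\xi,Y)=-g(\psi X,\varphi Y)$, where I abbreviate $\psi:=\widetilde{\nabla}_\xi\varphi$.

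It then remains to prove that $B(X,Y):=g(\psi X,\varphi Y)$ is skew-symmetric in $X$ and $Y$. Here I would first note $\widetilde{\nabla}_\xi\xi=0$, which is immediate from \eqref{nabxx} with $X=\xi$ together with the formula of the previous step. Differentiating $\varphi^2=Id-\eta\otimes\xi$ in the direction $\xi$ and using $\widetilde{\nabla}_\xi\xi=0$ along with the cancellation $\xi\,\eta(Y)=\eta(\widetilde{\nabla}_\xi Y)$, the $\xi$-direction terms drop out and I obtain the anticommutation relation $\psi\varphi=-\varphi\psi$. Combining this with the skew-adjointness of $\varphi$ (from \eqref{antisymphi}) and of $\psi$ (the preliminary fact applied with $X=\xi$), a short sign-tracking manipulation gives $B(Y,X)=-B(X,Y)$. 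Hence $g(\widetilde{\nabla}_X\xi,Y)+g(\widetilde{\nabla}_Y\xi,X)=0$, so $\xi$ is Killing.

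I expect the main obstacle to be the bookkeeping of the final step. One must thread the signs carefully through the two skew-adjointness relations and the anticommutation $\psi\varphi=-\varphi\psi$, and the cancellation of the $\xi$-direction terms in the differentiation of $\varphi^2=Id-\eta\otimes\xi$ succeeds only because $\widetilde{\nabla}_\xi\xi=0$. Establishing that anticommutation cleanly is the technical heart of the argument.
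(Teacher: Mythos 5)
Your argument is correct and follows essentially the same route as the paper's proof: reduce $\widetilde{\nabla}_X\xi$ to $(\widetilde{\nabla}_X\varphi)\xi$ via $\varphi\xi=0$ and the metric relation \eqref{metric}, then feed in the nearly paracosymplectic identity \eqref{npcos}. Your polarized version is in fact more complete than the paper's, which simply asserts $g((\widetilde{\nabla}_X\varphi)\xi,\varphi X)=0$ without justification; the anticommutation $(\widetilde{\nabla}_\xi\varphi)\varphi=-\varphi(\widetilde{\nabla}_\xi\varphi)$ you establish (resting on $\widetilde{\nabla}_\xi\xi=0$ and the skew-adjointness of $\widetilde{\nabla}_\xi\varphi$) is exactly what is needed to close that final step.
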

\begin{proof}
Clearly by using Eq. \eqref{metric} we obtain that $g(\widetilde\nabla_{\xi}\xi,\xi)=0$, so we can consider that $X$ (any non-zero tangent vector) is orthogonal to $\xi$.
Now, by using the fact that manifold is nearly paracosymplectic we get
\begin{align*}
g(\widetilde\nabla_{X}\xi, X)=&-g(\varphi\widetilde\nabla_{X}\xi, \varphi X)=g((\widetilde\nabla_{X}\varphi)\xi,\varphi X)=0.
\end{align*}
This completes the proof.
\end{proof}

\subsection{Geometry of slant submanifolds}
\noindent Let $M$ be a submanifold immersed in a $(2n + 1)$-dimensional almost paracontact manifold $\widetilde{M}$; we denote by the same symbol $g$ the induced non-degenerate metric on $M$. If $\Gamma(TM)$ denotes the tangent bundle of submanifold $M$ and $\Gamma(TM^{\bot })$ the set of vector fields normal to $M$ then Gauss and Weingarten formulas are given by respectively
\begin{align}
\widetilde{\nabla}_X  Y &= \nabla _{X} Y+h(X,Y), \label{gauss}\\
\widetilde{\nabla}_X  \zeta &=-A_{\zeta} X+\nabla _{X}^{\bot } \zeta. \label{wngrtn}
\end{align}
for any $X,Y \in \Gamma(TM)$ and $\zeta \in \Gamma(TM^{\bot })$, where $\nabla$ is the induced connection, $\nabla ^{\bot }$ is the normal connection on the normal bundle  $\Gamma(TM^{\bot })$, $h$ is the second fundamental form, and the shape operator $A_{\zeta}$ associated with the normal section $\zeta$ is given by
\begin{align}
 \label{shp2form} g\left(A_{\zeta} X,Y\right)=g\left(h(X,Y),\zeta\right).
\end{align}
 If we write, for all $X \in \Gamma(TM)$ and $\zeta \in \Gamma(TM^{\bot })$ that
\begin{align} 
\varphi X&=tX+nX,\label{phix}\\
\varphi \zeta&=t'\zeta+n'\zeta, \label{phin}
\end{align}
where $tX$ (resp., $nX$) is tangential (resp., normal) part of $\varphi X$ and $t'\zeta$ (resp., $n'\zeta$) is tangential (resp., normal) part of $\varphi \zeta$.  Then the submanifold $M$ is said to be {\it invariant} if $n$ is identically zero and {\it anti-invariant} if $t$ is identically zero. From Eqs.\eqref{antisymphi} and \eqref{phix}, we obtain that 
\begin{align} \label{antixty}
g(X,tY)=-g(tX,Y). 
\end{align}
The mean curvature vector $H$ of $M$ is given by $H = \frac{1}{n}{\rm trace}\, h$. A  submanifold $M$ is said to be \cite{CPR} 
\begin{itemize}
\item[$\bullet$] {\it totally geodesic} if its second fundamental form vanishes identically. 
\item[$\bullet$] {\it umbilical} in the direction of a normal vector field $\zeta$ on $M$, if $A_{\zeta} = \delta Id$, for certain function $\delta$ on $M$, here such $\zeta$ is called a umbilical section. 
\item[$\bullet$] {\it totally umbilical} if $M$ is umbilical with respect to every (local) normal vector field. 
\end{itemize}
For all $ X, Y  \in \Gamma(T\widetilde{M})$, the covariant derivative of tensor field $\varphi$ is  defined as
\begin{align}\label{covphi}
(\widetilde{\nabla}_X{\varphi})Y=\widetilde{\nabla}_X{\varphi}Y-\varphi\widetilde\nabla_X{Y}.
\end{align}
Let $\mathcal{T}_{X}Y$ be the tangential and $\mathcal{N}_{X}Y$ be the normal part of $(\widetilde\nabla_X{\varphi})Y$ then
\begin{align}\label{phitn}
(\widetilde{\nabla}_X{\varphi})Y=\mathcal{T}_{X}Y+\mathcal{N}_{X}Y, \, \, \forall X , Y  \in \Gamma(TM). 
\end{align}
For later use we can verify the property of $\mathcal{T}$ and $\mathcal{N}$ given by,
\begin{align} \label{P7}
 g(\mathcal{T}_{X}Y, W)=-g(Y, \mathcal{T}_{X}W) ,
\end{align}
$\forall \, X, Y, W \in \Gamma(TM)$ and $N \in \Gamma(TM)^{\bot}$.
On a submanifold $M$ of a nearly parcosymplectic manifold, by Eqs.\eqref{npcos} and \eqref{phitn}, we have
\begin{align}
 \mathcal{T}_{X}Y+\mathcal{T}_{Y}X&=0 ,\label{tantan}\\
\mathcal{N}_{X}Y+\mathcal{N}_{Y}X&=0 ,\label{nornor}
\end{align}
 for any $ X, Y \in \Gamma(TM)$.
 
\noindent Following the notion of slant submanifold in \cite{PA, LC1}. We give the following definition:
\begin{definition}\label{slantdfn}
Let $M$ be an isometrically immersed submanifold of an almost paracontact manifold $\widetilde{M}(\varphi,\xi,\eta,g)$ and $\mathfrak{D_{\lambda}}$ be the non-degenerate distribution on $M$. Then $M$ is said to be \textit{slant} submanifold of $\widetilde{M}$ if there exists a constant $\lambda \geq 0$ such that 
\begin{align*}
 t^{2} =\lambda \left(Id-\eta \otimes \xi \right), \quad g(tX, Y)=-g(X, tY)
\end{align*}
for any nonzero vectors $X,Y \in \mathfrak{D_{\lambda}}$ at $p \in M$ and not proportional to $\xi_{p}$. Here, $\lambda$ is a \textit{slant coefficient} of $M$. 
\end{definition}
\begin{rem}
It is important to note that the invariant and anti-invariant immersions with slant coefficent $\lambda=1$ and $\lambda=0$ respectively.  A slant immersion which is neither invariant nor anti-invariant is called a \textit{proper slant} immersion. 
\end{rem} 
\noindent If we denote the orthogonal distribution to $\xi \in \Gamma(TM)$ by $\mathbb{D}$, then the tangent bundle of $M$ is given as follows: 
\begin{align*}
TM=\mathbb{D}\,\, \oplus <\xi>.
\end{align*}

\section{$\mathcal{P}\mathcal{R}$-anti-slant submanifolds}\label{pranti-sub}
\noindent In this section, we define $\mathcal{P}\mathcal{R}$-{\it anti-slant} submanifolds of an almost paracontact pseudo-Riemannain metric manifold and derive  characterization results for the same. 
\begin{definition}
Let $M$ is an isometrically immersed submanifold of an almost paracontact manifold $\widetilde{M}(\varphi,\xi,\eta,g)$ such that the characteristic vector field $\xi \in \Gamma(TM)$. Then $M$ is said to be a  $\mathcal{P}\mathcal{R}$-{\it anti-slant submanifold} if it    is furnished with the pair of non-degenerate orthogonal distribution ($\mathfrak{D}^{\bot}, \mathfrak{D}_{\lambda}$) satisfies the following conditions:
\begin{itemize}
\item[(i)] $TM = \mathfrak{D}^{\bot} \oplus \mathfrak{D}_{\lambda} \oplus <\xi>$, 
\item[(ii)] the distribution $\mathfrak{D}{^\bot}$ is anti-invariant under $\varphi$, i.e., $\varphi(\mathfrak{D}{^\bot})\subset \Gamma(TM){^\bot}$ and
\item[(iii)] the distribution $\mathfrak{D}_{\lambda}$ is slant distribution with slant coefficient $\lambda$.  
\end{itemize}
We say that a $\mathcal{P}\mathcal{R}$- anti-slant submanifold is \textit{proper} if $ \mathfrak{D}^{\bot} \neq \{0\}$,  $\mathfrak{D}_{\lambda}\neq \{0\}$ and $\lambda \neq 0, 1$.
A $\mathcal{P}\mathcal{R}$- anti-slant submanifold is said to be \textit{mixed totally geodesic} if $h(X, Z)=0$ for all $X \in \Gamma(\mathfrak{D}_{\lambda})$ and $Z \in \Gamma(\mathfrak{D}^{\bot}\oplus <\xi>)$.
\end{definition}
\noindent Now, we can give the following important corollary as a straight forward consequences of the definition of slant submanifold  of an almost paracontact manifolds:
\begin{proposition}\label{coro_slant}
Let $M$ be a slant submanifold of an almost paracontact metric manifold $\widetilde{M}(\varphi,\xi,\eta,g)$ with $\xi \in  \Gamma(TM)$. Then
\begin{align} 
g(tX,tY)&=\lambda g(\varphi X,\varphi Y), \label{gtcos}\\ 
g(nX,nY)&=(1-\lambda) g(\varphi X,\varphi Y), \label{gnsin}
\end{align}
for any $X,Y\in \Gamma(\mathfrak{D}_{\lambda})$. 
\end{proposition}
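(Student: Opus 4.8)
The plan is to establish both identities by direct computation from the defining algebraic relations, the key inputs being the slant condition $t^{2}=\lambda(Id-\eta\otimes\xi)$ from Definition~\ref{slantdfn}, the compatibility relation \eqref{metric}, the duality $g(X,\xi)=\eta(X)$ in \eqref{geta}, and the orthogonal decomposition $\varphi X=tX+nX$ in \eqref{phix}. No integrability or curvature information is needed; this is purely pointwise linear algebra on the slant distribution.

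For the first identity, I would start from $g(tX,tY)$ and use the skew-symmetry of $t$ (the relation $g(tX,Y)=-g(X,tY)$ recorded in \eqref{antixty} and built into the slant definition) to transfer one factor of $t$ onto the second slot, obtaining $-g(X,t^{2}Y)$. Substituting the slant condition in the form $t^{2}Y=\lambda\bigl(Y-\eta(Y)\xi\bigr)$ and then applying $g(X,\xi)=\eta(X)$ gives $g(tX,tY)=-\lambda\bigl(g(X,Y)-\eta(X)\eta(Y)\bigr)$. The final step is to read off from \eqref{metric} that $g(X,Y)-\eta(X)\eta(Y)=-g(\varphi X,\varphi Y)$, which turns the expression into $\lambda\,g(\varphi X,\varphi Y)$, as claimed.

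For the second identity, I would expand $g(\varphi X,\varphi Y)$ via \eqref{phix}, writing $\varphi X=tX+nX$ and $\varphi Y=tY+nY$. Since $tX,tY\in\Gamma(TM)$ while $nX,nY\in\Gamma(TM^{\bot})$, the two cross terms $g(tX,nY)$ and $g(nX,tY)$ vanish by orthogonality of the tangent and normal bundles, leaving $g(\varphi X,\varphi Y)=g(tX,tY)+g(nX,nY)$. Feeding in the first identity $g(tX,tY)=\lambda\,g(\varphi X,\varphi Y)$ and solving for $g(nX,nY)$ yields $(1-\lambda)\,g(\varphi X,\varphi Y)$.

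The computation is elementary and I anticipate no conceptual obstacle; the only point demanding genuine care is the bookkeeping of signs, since the paracontact metric \eqref{metric} carries a minus sign (reflecting the signature $(n+1,n)$) that propagates through the skew-symmetry step, and a single sign slip would flip $\lambda$ to $-\lambda$. I would therefore verify carefully the two places where signs enter, namely the transfer $g(tX,tY)=-g(X,t^{2}Y)$ and the rewriting $g(X,Y)-\eta(X)\eta(Y)=-g(\varphi X,\varphi Y)$, and confirm that the restriction of $X,Y$ to $\Gamma(\mathfrak{D}_{\lambda})$ (where the slant coefficient $\lambda$ is a genuine constant) is what legitimizes pulling $\lambda$ out as a scalar throughout.
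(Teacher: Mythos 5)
Your proposal is correct and follows essentially the same route as the paper: both prove \eqref{gtcos} by writing $g(tX,tY)=-g(X,t^{2}Y)$, substituting the slant condition, and converting via \eqref{metric} (the paper phrases the middle step as $t^{2}Y=\lambda\varphi^{2}Y$ and invokes \eqref{antisymphi}, which is the same computation), and both then deduce \eqref{gnsin} from the orthogonal decomposition $\varphi X=tX+nX$ together with \eqref{gtcos}. Your sign bookkeeping is accurate, so no further changes are needed.
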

\begin{proof}
Since $g(tX,tY)=-g(X,t^{2}Y)=-\lambda g(X,\varphi^{2}Y)=\lambda g(\varphi X,\varphi Y)$. Therefore, by the use of Eq. \eqref{metric} and definition \ref{slantdfn}, we get Eq. \eqref{gtcos}. Eq. \eqref{gnsin} follows from Eqs. \eqref{phix} and \eqref{gtcos}. This completes the proof.
\end{proof}

\begin{proposition}\label{pranti-thm}
Let $M$ be a immersed submanifold of an almost paracontact metric manifold $\widetilde{M}(\varphi,\xi,\eta,g)$ such that $\xi \in \Gamma(TM)$. Then for any $X, Y \in \Gamma(\mathfrak{D}_{\lambda})$, we have
\begin{itemize}
\item[(i)] $t'nX=(1-\lambda)(X-\eta(X)\xi)$ and 
\item[(ii)] $n'nX=-ntX$.
\end{itemize}
\end{proposition}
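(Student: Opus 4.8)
The plan is to compute $\varphi^{2}X$ for $X \in \Gamma(\mathfrak{D}_{\lambda})$ in two independent ways and then match tangential against tangential and normal against normal. First I would apply $\varphi$ to the decomposition \eqref{phix}, writing $\varphi^{2}X = \varphi(tX) + \varphi(nX)$. Since $tX$ is tangential, I expand $\varphi(tX)$ once more by \eqref{phix} as $t^{2}X + ntX$; since $nX$ is normal, I expand $\varphi(nX)$ by \eqref{phin} as $t'nX + n'nX$. Collecting the terms according to where they land gives
\[
\varphi^{2}X = \bigl(t^{2}X + t'nX\bigr) + \bigl(ntX + n'nX\bigr),
\]
where the first bracket gathers the tangential contributions and the second the normal ones, using that $t,t'$ produce tangential vectors while $n,n'$ produce normal vectors.

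On the other hand, the structure equation \eqref{phieta} gives $\varphi^{2}X = X - \eta(X)\xi$, which is purely tangential because both $X$ and $\xi$ lie in $\Gamma(TM)$, so its normal component vanishes. Comparing the normal parts of the two expressions for $\varphi^{2}X$ then yields $ntX + n'nX = 0$, i.e.\ $n'nX = -ntX$, which is exactly assertion (ii).

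For (i), I would compare the tangential parts, obtaining $t^{2}X + t'nX = X - \eta(X)\xi$. At this point I invoke the defining slant condition from Definition \ref{slantdfn}, namely $t^{2}X = \lambda\bigl(X - \eta(X)\xi\bigr)$ for $X \in \Gamma(\mathfrak{D}_{\lambda})$. Substituting this and solving for $t'nX$ produces $t'nX = (1-\lambda)\bigl(X - \eta(X)\xi\bigr)$, establishing (i).

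I do not anticipate a genuine obstacle here: the whole argument is a single double application of $\varphi$ combined with $\varphi^{2} = Id - \eta\otimes\xi$, and both identities fall out \emph{simultaneously} from one tangential/normal splitting. The only point demanding care is the bookkeeping of which summands are tangential and which are normal before the components are equated; once that is settled, (i) and (ii) follow at once, with the slant hypothesis entering only in the tangential comparison.
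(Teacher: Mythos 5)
Your argument is correct, and for part (ii) it is essentially the paper's own proof: the authors likewise add $\varphi tX=t^{2}X+ntX$ to $\varphi nX=t'nX+n'nX$ and compare with $\varphi^{2}X=X-\eta(X)\xi$. Where you differ is part (i). The paper first establishes $t'nX=(1-\lambda)(X-\eta(X)\xi)$ by pairing $\varphi nX$ against a second vector $Y\in\Gamma(\mathfrak{D}_{\lambda})$ and invoking the metric identity \eqref{gnsin} from Proposition \ref{coro_slant}, and only then feeds (i) into the $\varphi^{2}$ computation to get (ii). You instead read both identities off a single tangential/normal splitting of $\varphi^{2}X$, using only $t^{2}X=\lambda(X-\eta(X)\xi)$ from Definition \ref{slantdfn}. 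Your route is slightly more economical (it does not need Proposition \ref{coro_slant} at all) and arguably cleaner on one point: the paper's inner-product argument, as written, only tests $t'nX$ against vectors of $\mathfrak{D}_{\lambda}$, so strictly it pins down just that component, whereas your direct comparison of tangential parts identifies $t'nX$ as a full tangent vector in one step. Both proofs rest on the same two facts ($\varphi^{2}=Id-\eta\otimes\xi$ and the slant condition), so the difference is one of packaging rather than substance.
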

\begin{proof}
We have from Eq. \eqref{phin} that $\varphi nX=t'nX+n'nX$. Then, taking inner product with $Y$ and applying Eqs. \eqref{phieta} and \eqref{gnsin} we derive the formula-(i). For formula-(ii), we have from Eq. \eqref{phix} and definition \ref{slantdfn} that
\begin{align}\label{antimain-1}
 \varphi tX=t^{2}X+ntX=\lambda(X-\eta(X)\xi) + ntX.
\end{align}
On the other hand, using Eq. \eqref{phin} and formula-(i) it can be written that
\begin{align}\label{antimain-2}
 \varphi nX=t'nX+n'nX=(1-\lambda) (X-\eta(X)\xi) + n'nX.
\end{align}
From Eqs. \eqref{antimain-1} and \eqref{antimain-2} we get
$\varphi tX+\varphi nX=(X-\eta(X)\xi)+ntX+n'nX$. Formula-(ii) can be achieved by employing Eqs. \eqref{phieta} and \eqref{phix} in previous expression. This completes the proof.
\end{proof}

\noindent Now, we give the necessary and sufficient condition for integrability and totally geodesic foliation of distributions equipped with the submanifold $M$.
\begin{theorem}\label{pranti-thm1}
Let $M$ be a proper $\mathcal{P}\mathcal{R}$-anti-slant submanifold of a nearly paracosymplectic manifold $\widetilde{M}(\varphi,\xi,\eta,g)$. Then the distribution $\mathfrak{D}_{\lambda}$, is integrable if and only if 
\begin{align}\label{1}
2\lambda g(\widetilde\nabla_{X}Y,Z)=g(A_{ntY}X,Z)+g(A_{ntX}Y,Z)-g(A_{\varphi Z}tY,X)-g(A_{\varphi Z}tX,Y). 
\end{align}
for any $ Z \in \Gamma(\mathfrak{D}^{\bot} \oplus \xi)$ and $X, Y \in \Gamma(\mathfrak{D}_{\lambda})$.
\end{theorem}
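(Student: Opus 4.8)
The plan is to reduce integrability of $\mathfrak{D}_{\lambda}$ to a pointwise identity and then read off \eqref{1} from it. Since $TM=\mathfrak{D}^{\bot}\oplus\mathfrak{D}_{\lambda}\oplus\langle\xi\rangle$ is an orthogonal, non-degenerate decomposition, $\mathfrak{D}_{\lambda}$ is integrable if and only if $g([X,Y],Z)=0$ for all $X,Y\in\Gamma(\mathfrak{D}_{\lambda})$ and all $Z\in\Gamma(\mathfrak{D}^{\bot}\oplus\langle\xi\rangle)$. Because the second fundamental form is symmetric, $[X,Y]=\widetilde\nabla_{X}Y-\widetilde\nabla_{Y}X$, so everything is governed by $g(\widetilde\nabla_{X}Y,Z)$ and its $X\leftrightarrow Y$ transpose. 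First I would record the slant consequences I intend to use: for $Y\in\Gamma(\mathfrak{D}_{\lambda})$ one has $\eta(Y)=0$, so by Definition \ref{slantdfn} and \eqref{phix}, $\varphi(tY)=t^{2}Y+ntY=\lambda Y+ntY$; moreover $tZ=0$ for $Z\in\Gamma(\mathfrak{D}^{\bot}\oplus\langle\xi\rangle)$.

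The key step is to produce a workable expression for $\lambda g(\widetilde\nabla_{X}Y,Z)$. I would start from $\varphi(tY)=\lambda Y+ntY$, apply $\widetilde\nabla_{X}$, and use \eqref{covphi} to write $\varphi\widetilde\nabla_{X}(tY)=\lambda\widetilde\nabla_{X}Y+\widetilde\nabla_{X}(ntY)-(\widetilde\nabla_{X}\varphi)(tY)$. Pairing with $Z$ and using the skew-symmetry \eqref{antisymphi}, the left-hand side becomes $-g(\widetilde\nabla_{X}(tY),\varphi Z)$, which by Gauss' formula \eqref{gauss}, the relation \eqref{shp2form} and self-adjointness of $A$ produces the term $g(A_{\varphi Z}tY,X)$ (for $Z=\xi$ this term is absent since $\varphi\xi=0$). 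Weingarten's formula \eqref{wngrtn} turns $\widetilde\nabla_{X}(ntY)$ into a contribution carrying $A_{ntY}X$, while $(\widetilde\nabla_{X}\varphi)(tY)$ enters only through its tangential part $\mathcal{T}_{X}(tY)$ by \eqref{phitn}. Collecting terms yields
\begin{align*}
\lambda g(\widetilde\nabla_{X}Y,Z)=g(A_{ntY}X,Z)-g(A_{\varphi Z}tY,X)+g(\mathcal{T}_{X}(tY),Z).
\end{align*}

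Next I would symmetrize: adding this identity to its $X\leftrightarrow Y$ transpose, the four shape-operator terms assemble exactly into the right-hand side of \eqref{1}, while the left-hand sides combine, via $g(\widetilde\nabla_{X}Y,Z)+g(\widetilde\nabla_{Y}X,Z)=2g(\widetilde\nabla_{X}Y,Z)-g([X,Y],Z)$, into $2\lambda g(\widetilde\nabla_{X}Y,Z)-\lambda g([X,Y],Z)$. Hence \eqref{1} holds precisely when $\lambda g([X,Y],Z)+g(\mathcal{T}_{X}(tY)+\mathcal{T}_{Y}(tX),Z)=0$. Since $M$ is proper, $\lambda\neq0$, so the whole theorem reduces to showing that the symmetric combination $g(\mathcal{T}_{X}(tY)+\mathcal{T}_{Y}(tX),Z)$ vanishes; granting this, \eqref{1} becomes equivalent to $g([X,Y],Z)=0$, i.e. to integrability.

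The hard part will be exactly this last vanishing. The available tools are the nearly paracosymplectic identities \eqref{tantan} and \eqref{nornor} together with \eqref{P7}; jointly these make $(X,Y,W)\mapsto g(\mathcal{T}_{X}Y,W)$ totally skew, which kills the diagonal and the purely tangential interactions but not immediately the mixed terms carrying the operator $t$. To control these I would differentiate $\varphi^{2}=Id-\eta\otimes\xi$ from \eqref{phieta} to relate $(\widetilde\nabla_{X}\varphi)\varphi Y$ to $\varphi(\widetilde\nabla_{X}\varphi)Y$, substitute $\varphi Y=tY+nY$ from \eqref{phix}, and use \eqref{nornor} to cancel the $\mathcal{N}$-contributions after symmetrization, with the $\langle\xi\rangle$-component absorbed through Proposition \ref{killing}. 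I expect the residue to collapse to a normal-direction expression of the form $g((\widetilde\nabla_{X}\varphi)nY+(\widetilde\nabla_{Y}\varphi)nX,Z)$, and the crux is to show this drops out by reapplying \eqref{npcos} with a normal argument together with the skew-adjointness of $\widetilde\nabla\varphi$ and Proposition \ref{pranti-thm}. This is the step where the nearly paracosymplectic hypothesis is genuinely used and where the main technical care is required.
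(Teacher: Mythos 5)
Your reduction is correct as far as it goes, and it in fact shadows the paper's own computation from a different entry point: the identity
\begin{align*}
\lambda g(\widetilde\nabla_{X}Y,Z)=g(A_{ntY}X,Z)-g(A_{\varphi Z}tY,X)+g(\mathcal{T}_{X}tY,Z)
\end{align*}
is right, the symmetrization is right, and the four shape-operator terms do assemble into the right-hand side of \eqref{1}. But the proof is not finished: everything hinges on the vanishing of $g(\mathcal{T}_{X}tY+\mathcal{T}_{Y}tX,Z)$, and for this you only announce a strategy and say you ``expect'' the residue to collapse. That is a genuine gap, not a routine verification. Using the skew-adjointness of $\widetilde\nabla_{X}\varphi$ together with \eqref{phieta} and \eqref{nornor}, one checks that your residue equals $g((\widetilde\nabla_{X}\varphi)Z,nY)+g((\widetilde\nabla_{Y}\varphi)Z,nX)$. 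The total skew-symmetry of $(U,V,W)\mapsto g((\widetilde\nabla_{U}\varphi)V,W)$ --- which is what \eqref{tantan}, \eqref{nornor} and \eqref{P7} jointly encode --- does not annihilate this sum, because the third slots carry $nY$ and $nX$, which are not exchanged with $X$ and $Y$ by any of the available symmetries; differentiating $\varphi^{2}=Id-\eta\otimes\xi$ only yields relations such as $g((\widetilde\nabla_{X}\varphi)Z,\varphi X)=0$ (a consequence of \eqref{nabxx}), which trades the $nX$-component for the $tX$-component without producing the cancellation. So, as written, your argument establishes only that \eqref{1} is equivalent to $\lambda g([X,Y],Z)+g(\mathcal{T}_{X}tY+\mathcal{T}_{Y}tX,Z)=0$, not to integrability.

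You should know that the paper's proof is exposed at exactly the same joint: in passing from \eqref{pranti-thm1-4} to \eqref{pranti-thm1-5}, the terms $g((\widetilde\nabla_{X}\varphi)Z,nY)$ and $g((\widetilde\nabla_{Y}\varphi)Z,nX)$ are silently discarded, and these are precisely your residue; the paper also converts $\lambda\{g(\widetilde\nabla_{X}Y,Z)+g(\widetilde\nabla_{Y}X,Z)\}$ into $2\lambda g(\widetilde\nabla_{X}Y,Z)$ without comment, which is the same $g([X,Y],Z)$ bookkeeping you handle explicitly. Your accounting is therefore more transparent than the paper's, but transparency about a missing step does not supply it: to complete the proof you must actually prove $g((\widetilde\nabla_{X}\varphi)Z,nY)+g((\widetilde\nabla_{Y}\varphi)Z,nX)=0$ for all $X,Y\in\Gamma(\mathfrak{D}_{\lambda})$ and $Z\in\Gamma(\mathfrak{D}^{\bot}\oplus\langle\xi\rangle)$, or else carry this term into the statement.
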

\begin{proof}
From the fact that $\xi$ is killing and Eq. \eqref{metric}, we have
\begin{align}\label{pranti-thm1-1}
g([X,Y],Z)=g(\widetilde\nabla_{X}Y,Z)-g(\widetilde\nabla_{Y}X,Z)=-g(\varphi\widetilde\nabla_{X}Y,\varphi Z)-g(\widetilde\nabla_{Y}X, Z).
\end{align} 
Using Eqs. \eqref{npcos} and \eqref{covphi} in Eq. \eqref{pranti-thm1-1}, we obtain that
\begin{align}\label{pranti-thm1-2}
g([X,Y], Z) &=-g(\widetilde\nabla_{X} \varphi Y-(\widetilde\nabla_{X}\varphi) Y,\varphi Z)-g(\widetilde\nabla_{Y}X, Z)\nonumber \\
            &=-g(\widetilde\nabla_{X}{tY}, \varphi Z)-g(\widetilde\nabla_{X}{nY}, \varphi Z)-g((\widetilde\nabla_{Y}\varphi) X,\varphi Z)-
               g(\widetilde\nabla_{Y}X, Z).
\end{align}
Applying Eqs. \eqref{cplevi} and \eqref{gauss} in equation \eqref{pranti-thm1-2}, we get
\begin{align*}
g([X,Y], Z) &=-g(h(X,tY), \varphi Z)+g(\widetilde\nabla_{X}\varphi Z, nY) \\
               &-g(\widetilde\nabla_{Y}\varphi X-\varphi \widetilde\nabla_{Y}X, \varphi Z)-g(\widetilde\nabla_{Y}X, Z).
\end{align*}
By the use of Eqs. \eqref{phieta}, \eqref{metric}, \eqref{phix} and covariant differentiation of $\varphi$, the above expression reduced to
\begin{align}\label{pranti-thm1-4}
g([X,Y], Z) &=-g(h(X,tY), \varphi Z)+g((\widetilde\nabla_{X}\varphi) Z+\varphi\widetilde\nabla_{X}Z, nY) \nonumber \\
               &-g(\widetilde\nabla_{Y}(tX+nX), \varphi Z)-2g(\widetilde\nabla_{Y}X, Z).
\end{align}
Employing Eqs. \eqref{phin}, \eqref{phitn} and the fact that structure is nearly paracosymplectic in \eqref{pranti-thm1-4}, we recieve that
\begin{align}\label{pranti-thm1-5}
g([X,Y], Z) &=-g(h(X,tY), \varphi Z)-g(\widetilde\nabla_{X}Z, t'nY+n'nY) \nonumber \\
               & -g(h(Y,tX), \varphi Z)-g(\widetilde\nabla_{Y}Z, t'nX+n'nX)-2g(\widetilde\nabla_{Y}X, Z).
\end{align}
In light of proposiotion \ref{pranti-thm} and Eq. \eqref{cplevi}, equation \eqref{pranti-thm1-5} yields
\begin{align*}
g([X,Y], Z) &=-g(h(X,tY), \varphi Z)+(1-\lambda) g(\widetilde\nabla_{X}Y,Z)+g(\widetilde\nabla_{X}Z, ntY) \nonumber \\
               & -g(h(Y,tX), \varphi Z)+(1-\lambda)g(\widetilde\nabla_{Y}X,Z)+g(\widetilde\nabla_{Y}Z, ntX)-2g(\widetilde\nabla_{Y}X, Z).
\end{align*}
From above equation, we conclude that 
\begin{align}\label{pranti-thm1-7}
2\lambda g(\widetilde\nabla_{X}Y,Z)&=-g(h(X,tY), \varphi Z)+g(\widetilde\nabla_{X}Z, ntY) \nonumber \\
               & -g(h(Y,tX),\varphi Z)+g(\widetilde\nabla_{Y}Z, ntX).
\end{align}
By the virtue of Eqs. \eqref{shp2form} and \eqref{pranti-thm1-7}, we obtain equation \eqref{1}. This completes the proof of the theorem. 
\end{proof}

\begin{theorem}\label{pranti-thm2}
Let $M$ be a proper $\mathcal{P}\mathcal{R}$-anti-slant submanifold of a nearly paracosymplectic manifold $\widetilde{M}(\varphi,\xi,\eta,g)$. Then the distribution $(\mathfrak{D}^{\bot} \oplus \xi)$ defines a totally geodesic foliation if and only if 
\begin{align}\label{2}
2g(A_{ntX}Z, W)= g(A_{\varphi W}Z,tX)+g(A_{\varphi Z}W, tX) 
\end{align}
for any $Z,W \in \Gamma(\mathfrak{D}^{\bot} \oplus \xi)$ and $X \in \Gamma(\mathfrak{D}_{\lambda})$.
\end{theorem}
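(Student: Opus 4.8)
The plan is to translate the geometric condition into an inner-product statement and then run a computation in the spirit of Theorem \ref{pranti-thm1}. First, recall that $\mathfrak{D}^{\bot}\oplus\langle\xi\rangle$ defines a totally geodesic foliation precisely when $\nabla_Z W\in\Gamma(\mathfrak{D}^{\bot}\oplus\langle\xi\rangle)$ for all $Z,W\in\Gamma(\mathfrak{D}^{\bot}\oplus\langle\xi\rangle)$, that is, when the $\mathfrak{D}_\lambda$-component of $\nabla_Z W$ vanishes. Since $h(Z,W)$ is normal while $X\in\Gamma(\mathfrak{D}_\lambda)$ is tangent, the Gauss formula \eqref{gauss} gives $g(\widetilde\nabla_Z W,X)=g(\nabla_Z W,X)$, so the foliation is totally geodesic if and only if $g(\widetilde\nabla_Z W,X)=0$ for all such $Z,W$ and all $X\in\Gamma(\mathfrak{D}_\lambda)$. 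The task then reduces to expressing $g(\widetilde\nabla_Z W,X)$ through shape operators.

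The key device is to feed the slant identity of Definition \ref{slantdfn} into the quantity $g(A_{ntX}Z,W)$. From $\varphi(tX)=t^{2}X+ntX$ and $t^{2}X=\lambda X$ (valid since $\eta(X)=0$) one has $ntX=\varphi tX-\lambda X$; pairing with $\widetilde\nabla_Z W$, using \eqref{shp2form} and that $h(Z,W)$ is normal, yields
\[
g(A_{ntX}Z,W)=g(\widetilde\nabla_Z W,\varphi tX)-\lambda\,g(\widetilde\nabla_Z W,X).
\]
Next I would move $\varphi$ across by \eqref{antisymphi}, expand $\varphi\widetilde\nabla_Z W$ via \eqref{covphi}, and use that $\varphi W$ is normal (anti-invariance of $\mathfrak{D}^{\bot}$ together with $\varphi\xi=0$), so that Weingarten \eqref{wngrtn} produces $g(A_{\varphi W}Z,tX)$ while \eqref{phitn} isolates the tangential part $g(\mathcal{T}_Z W,tX)$ of $(\widetilde\nabla_Z\varphi)W$. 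This gives
\[
g(A_{ntX}Z,W)=g(A_{\varphi W}Z,tX)+g(\mathcal{T}_Z W,tX)-\lambda\,g(\widetilde\nabla_Z W,X).
\]
Writing the same identity with $Z$ and $W$ interchanged (the left side is symmetric, as $g(A_{ntX}Z,W)=g(h(Z,W),ntX)$) and adding the two, the nearly paracosymplectic relation \eqref{tantan} makes $\mathcal{T}_Z W+\mathcal{T}_W Z$ vanish, leaving
\[
2g(A_{ntX}Z,W)=g(A_{\varphi W}Z,tX)+g(A_{\varphi Z}W,tX)-\lambda\big(g(\widetilde\nabla_Z W,X)+g(\widetilde\nabla_W Z,X)\big).
\]

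With this identity in hand both implications are short: if the foliation is totally geodesic then $g(\widetilde\nabla_Z W,X)=0$ for every ordered pair, the bracketed term drops and \eqref{2} follows; conversely, as $M$ is proper we have $\lambda\neq0$, so \eqref{2} forces $g(\widetilde\nabla_Z W,X)+g(\widetilde\nabla_W Z,X)=0$. The step I expect to be the real obstacle is exactly this last passage: the symmetrization that kills $\mathcal{T}_Z W+\mathcal{T}_W Z$ leaves me in control only of the \emph{symmetric} combination, whereas the genuine foliation condition is $g(\widetilde\nabla_Z W,X)=0$ for each ordered pair. To bridge the gap one must know that $\mathfrak{D}^{\bot}\oplus\langle\xi\rangle$ is integrable, so that $g([Z,W],X)=0$ and the antisymmetric part is absent; I would therefore invoke or establish integrability of $\mathfrak{D}^{\bot}\oplus\langle\xi\rangle$ separately, or read the totally-geodesic hypothesis through the symmetrized second fundamental form of the leaves. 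The remaining delicate bookkeeping is the $\xi$-direction, where one uses $\varphi\xi=0$ and $\eta(X)=0$ to ensure that no spurious $\eta$-terms survive.
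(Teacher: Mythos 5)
Your computation is correct and lands on exactly the identity the paper obtains, namely
\begin{align*}
\lambda\bigl(g(\widetilde\nabla_Z W,X)+g(\widetilde\nabla_W Z,X)\bigr)=g(A_{\varphi W}Z,tX)+g(A_{\varphi Z}W,tX)-2g(A_{ntX}Z,W),
\end{align*}
though you reach it by a somewhat tidier route: you start from $g(A_{ntX}Z,W)$, substitute $ntX=\varphi tX-\lambda X$, and then symmetrize in $Z,W$ so that \eqref{tantan} kills $\mathcal{T}_ZW+\mathcal{T}_WZ$, whereas the paper starts from $g(\widetilde\nabla_ZW,X)=-g(\varphi\widetilde\nabla_ZW,\varphi X)$, decomposes $\varphi X=tX+nX$, and routes the $nX$-terms through Proposition \ref{pranti-thm} (the $t'nX$ and $n'nX$ formulas) before the symmetric combination $g(\widetilde\nabla_ZW,X)+g(\widetilde\nabla_WZ,X)$ emerges. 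Your version avoids Proposition \ref{pranti-thm} entirely and makes the role of the nearly paracosymplectic condition (via \eqref{tantan}) explicit, which is a small but genuine gain in transparency. The ``real obstacle'' you flag at the end --- that the identity only controls the symmetric combination, while the totally geodesic foliation condition is $g(\widetilde\nabla_ZW,X)=0$ for each \emph{ordered} pair, so one still needs $g([Z,W],X)=0$ --- is a legitimate concern, but you should know that the paper's own proof does not address it either: it derives the same symmetric identity and then simply asserts that \eqref{2} follows ``by virtue of Eq.~\eqref{shp2form}.'' In effect the paper identifies the totally geodesic foliation condition with the vanishing of the symmetrized second fundamental form of the distribution (equivalently, $\nabla_ZZ\in\Gamma(\mathfrak{D}^{\bot}\oplus\xi)$ for all $Z$, by polarization), and your instinct to either establish integrability of $\mathfrak{D}^{\bot}\oplus\langle\xi\rangle$ separately or to state the equivalence in terms of the symmetrized form is the more careful reading; it is not a defect of your argument relative to the paper's.
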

\begin{proof}
We have from Eq. \eqref{metric} and the fact that $X$ and $\xi$ are orthogonal that
\begin{align}\label{pranti-thm2-1}
g(\widetilde\nabla_{Z}W, X)=-g(\varphi\widetilde\nabla_{Z}W, \varphi X).
\end{align}
Using Eqs. \eqref{antisymphi}, \eqref{phix} and \eqref{covphi} in Eq. \eqref{pranti-thm2-1}, we get
\begin{align}\label{pranti-thm2-2}
g(\widetilde\nabla_{Z}W, X)&=-g(\varphi\widetilde\nabla_{Z}W, tX)-g(\varphi\widetilde\nabla_{Z}W, nX) \nonumber \\
                  &=-g(\widetilde\nabla_{Z}\varphi W-(\widetilde\nabla_{Z}\varphi)W, tX)+g(\widetilde\nabla_{Z}W, \varphi nX).
\end{align}
Employing Eqs. \eqref{phitn}, \eqref{cplevi} and proposition \ref{pranti-thm} in \eqref{pranti-thm2-2}, we obtain that
\begin{align*}
g(\widetilde\nabla_{Z}W, X)	&=-g(\widetilde\nabla_{Z}\varphi W,tX)+g((\widetilde\nabla_{Z}\varphi)W, tX)+ g(\widetilde\nabla_{Z}W,t'nX)   
                                                           +g(\widetilde\nabla_{Z}W, n'nX) \nonumber \\ 
                                                    &= g(\widetilde\nabla_{Z}tX, \varphi W)+g((\widetilde\nabla_{Z}\varphi)W, tX)+(1-\lambda)g(\widetilde
                                                         \nabla_{Z}W,X) -g(\widetilde\nabla_{Z}W, ntX).
\end{align*}
Above expression on using nearly paracosymplectic structure and Eq. \eqref{covphi} reduced to 
\begin{align}\label{pranti-thm2-4}
g(\widetilde\nabla_{Z}W, X)&= g(\widetilde\nabla_{Z}tX, \varphi W)-g((\widetilde\nabla_{W}\varphi)Z, tX)+(1-\lambda) 
                                                      g(\widetilde\nabla_{Z}W,X)\nonumber \\ &-g(\widetilde\nabla_{Z}W, ntX)\nonumber \\
                                                &= g(\widetilde\nabla_{Z}tX, \varphi W)-g(\widetilde\nabla_{W}\varphi Z-\varphi\widetilde\nabla_{W}Z, tX)+(1- 
                                                        \lambda) g(\widetilde\nabla_{Z}W,X)\nonumber \\ &-g(\widetilde\nabla_{Z}W, ntX).          
\end{align}
From above expression, we conclude that
\begin{align}\label{pranti-thm2-5}
\lambda g(\widetilde\nabla_{Z}W, X) &= g(\widetilde\nabla_{Z}tX, \varphi W)-g(\widetilde\nabla_{W}\varphi Z,tX)-g(\widetilde\nabla_{W}Z, t^{2}X+ntX) 
                                               \nonumber \\&-g(\widetilde\nabla_{Z}W, ntX).          
\end{align}
By the use of Eqs. \eqref{cplevi}, \eqref{shp2form} and the fact that $\eta(X)=0$, we obtain from Eq. \eqref{pranti-thm2-5} that
\begin{align*}
\lambda g(\widetilde\nabla_{Z}W, X) &= g(h(Z,tX), \varphi W)+g(h(W, tX), \varphi Z)-\lambda g(\widetilde\nabla_{W}Z, X) \\ 
                 &-2g(h(W,Z), ntX).          
\end{align*}
By the virtue of Eq. \eqref{shp2form} the above expression yields Eq. \eqref{2}.
\end{proof}

\section{$\mathcal{P}\mathcal{R}$-anti-slant warped product of the form $F\times_{f}N_{\lambda}$}\label{prwanti}
\noindent Let $\left(B,g_{B} \right)$ and $\left(F ,g_{F} \right)$ be two pseudo-Riemannian manifolds and ${f}$ be a positive smooth function on $B$. Consider the product manifold $B\times F$ with canonical projections 
\begin{align}\label{cp}
\pi:B \times F\to B\quad{\rm and}\quad \sigma:B \times F\to F.
\end{align}
Then the manifold $M=B \times_{f} F $ is said to be \textit{warped product} if it is equipped with the following warped metric
\begin{align}\label{wmetric}
g(X,Y)=g_{B}\left(\pi_{\ast}(X),\pi_{\ast}(Y)\right) +(f\circ\pi)^{2}g_{F}\left(\sigma_{\ast}(X),\sigma_{\ast}(Y)\right)
\end{align}
for all $X,Y\in TM$ and $\ast$ stands for derivation map, or equivalently,
\begin{align}
g=g_{B} +f^{2} g_{F}.
\end{align}
The function $f$ is called {\it the warping function} and a warped product manifold $M$ is said to be trivial if $f$ is constant (see also \cite{RB,SA,ts}).
Now, we recall the following proposition for the warped product manifolds \cite{RB}:
\begin{proposition}$\label{propwp}$
For $X, Y \in \Gamma(TB)$ and $Z \in \Gamma(TF)$, we obtain on warped product manifold $M=B\times_{f}F$ that
\begin{itemize}
\item[(i)] 	$\nabla _{X}Y \in \Gamma(TB),$
\item[(ii)]	$\nabla _{X}Z =\nabla _{Z}X=X(\ln{f})Z,$
\end{itemize}
where $\nabla$ denotes the Levi-civita connections on $M$.
\end{proposition}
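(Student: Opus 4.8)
The plan is to base the entire argument on the Koszul formula for the Levi-Civita connection $\nabla$ of $(M,g)$,
\begin{align*}
2g(\nabla_{U}V,W) &= U\,g(V,W)+V\,g(U,W)-W\,g(U,V)\\
&\quad+g([U,V],W)-g([U,W],V)-g([V,W],U),
\end{align*}
and to exploit the rigid structure of the product $M=B\times_{f}F$. First I would record the three facts that do all the work: (a) horizontal and vertical lifts are orthogonal, so $g(X,Z)=0$ whenever $X\in\Gamma(TB)$ and $Z\in\Gamma(TF)$; (b) the Lie bracket of a horizontal lift and a vertical lift vanishes, $[X,Z]=0$, while $[X,Y]\in\Gamma(TB)$ and $[Z,W]\in\Gamma(TF)$; and (c) the warping function $f$ (hence $g(X,Y)=g_{B}(X,Y)$) is pulled back from $B$ and so is annihilated by every vertical vector, whereas $g_{F}(Z,W)$ is pulled back from $F$ and is annihilated by every horizontal vector, subject to the relation $g(Z,W)=f^{2}g_{F}(Z,W)$.

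For part (i) I would set $U=X$, $V=Y$ with $X,Y\in\Gamma(TB)$ and test against an arbitrary $W=Z\in\Gamma(TF)$. By (a) the first two metric-derivative terms vanish, by (c) the third term $Z\,g(X,Y)$ vanishes, and by (b) all three bracket terms vanish. Hence $g(\nabla_{X}Y,Z)=0$ for every vertical $Z$, and non-degeneracy of $g$ forces $\nabla_{X}Y\in\Gamma(TB)$.

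For part (ii) I would first use that $\nabla$ is torsion-free together with $[X,Z]=0$ to obtain $\nabla_{X}Z-\nabla_{Z}X=0$, which gives the symmetry $\nabla_{X}Z=\nabla_{Z}X$. To identify the common value I would compute $g(\nabla_{X}Z,\cdot)$ against the two types of test field. Testing against $W\in\Gamma(TB)$, the same vanishing mechanism as in (i) gives $g(\nabla_{X}Z,W)=0$, so $\nabla_{X}Z$ is vertical. Testing against $W\in\Gamma(TF)$, only the single term $X\,g(Z,W)$ survives; using (c) this equals $X(f^{2})\,g_{F}(Z,W)=2fX(f)\,g_{F}(Z,W)$, and rewriting $g_{F}(Z,W)=f^{-2}g(Z,W)$ turns the Koszul identity into $g(\nabla_{X}Z,W)=X(\ln f)\,g(Z,W)$. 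Since $\nabla_{X}Z$ is already known to be vertical and $g$ is non-degenerate on $\Gamma(TF)$, this yields $\nabla_{X}Z=X(\ln f)Z$.

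The only real subtlety—the step I would treat most carefully—is the bookkeeping in (c): keeping straight that horizontal derivatives kill the $F$-metric coefficients while vertical derivatives kill the warping function, and that the surviving factor $X(f^{2})$ must be converted via $g(Z,W)=f^{2}g_{F}(Z,W)$ to produce exactly the logarithmic-derivative coefficient $X(\ln f)$. Everything else follows automatically from the orthogonality and bracket relations listed above.
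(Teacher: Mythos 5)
Your proof is correct. The paper itself offers no argument for this proposition --- it is simply recalled from Bishop and O'Neill \cite{RB} --- and the Koszul-formula computation you give (orthogonality of the two factors, vanishing of mixed brackets, and the fact that $g_B(X,Y)$ is killed by vertical fields while $f^2 g_F(Z,W)$ contributes only through $X(f^2)=2fX(f)$) is exactly the classical derivation found in that reference. The one point worth stating explicitly in the pseudo-Riemannian setting is that the conclusion from $g(\nabla_XY,Z)=0$ for all vertical $Z$ requires non-degeneracy of $g$ restricted to each factor, which holds here because $g_B$ and $g_F$ are themselves non-degenerate and the factors are $g$-orthogonal; you invoke non-degeneracy of $g$ on $\Gamma(TF)$, which is the right condition.
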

\noindent For a warped product $M=B \times_{f}F$, $B$ is totally geodesic and $F$ is totally umbilical in $M$ \cite{RB}.
\begin{definition}
A $\mathcal{P}\mathcal{R}$- anti-slant submanifold is called a $\mathcal{P}\mathcal{R}$-{\it anti-slant warped product} if it is a warped product of the form: $F\times_{f}N_{\lambda}$, where $F$ is an anti invariant submanifold, $N_{\lambda}$ is a proper slant submanifold of an almost paracontact manifold $\widetilde{M}(\varphi,\xi,\eta,g)$ with slant coefficient $\lambda$ and  $f$ is a non-constant positive function on $F$. If $f$ is constant then the product  of the form:  $F\times_{f}N_{\lambda}$ is called $\mathcal{P}\mathcal{R}$- {\it anti-slant product}.
\end{definition}
In this section, we shall examine $\mathcal{P}\mathcal{R}$-anti-slant warped product submanifolds of a nearly paracosymplectic manifold such that $\xi \in \Gamma(TN_{\lambda})$ and $\xi \in \Gamma(TF)$.

\noindent\textit{Case-$1$: When characteristic vector field $\xi$ is tangent to $N_{\lambda}$.}
\begin{proposition}\label{prop5.1}
There do not exist a $\mathcal{P}\mathcal{R}$-anti-slant warped product submanifold $M=F \times_{f}N_{\lambda}$ of a nearly paracosymplectic manifold $\widetilde{M}(\varphi,\xi,\eta,g)$.
\end{proposition}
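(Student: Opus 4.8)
The plan is to argue by contradiction: I will show that the standing hypothesis $\xi\in\Gamma(TN_{\lambda})$ forces the warping function $f$ to be constant, which is forbidden by the very definition of a $\mathcal{P}\mathcal{R}$-anti-slant warped product. The only structural input I need is Proposition \ref{propwp}(ii) together with the normalisation $g(\xi,\xi)=1$. Since in $M=F\times_{f}N_{\lambda}$ the base is $B=F$ and the fibre is $N_{\lambda}$, and since $\xi$ is assumed to lie in the fibre $N_{\lambda}$, all the relevant information about $\widetilde{\nabla}\xi$ along base directions is already contained in the warped-product connection formula.

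Concretely, I would first apply Proposition \ref{propwp}(ii) with $Z=\xi\in\Gamma(TN_{\lambda})$: for every $X\in\Gamma(TF)$,
\begin{align*}
\nabla_{X}\xi = X(\ln f)\,\xi .
\end{align*}
Next I would evaluate $g(\nabla_{X}\xi,\xi)$ in two independent ways. On one hand, the displayed relation and $g(\xi,\xi)=\eta(\xi)=1$ (from \eqref{geta} and \eqref{phieta}) give $g(\nabla_{X}\xi,\xi)=X(\ln f)\,g(\xi,\xi)=X(\ln f)$. On the other hand, since $g(\xi,\xi)\equiv 1$ is constant and $\nabla$ is metric, differentiating yields $0=X\!\bigl(g(\xi,\xi)\bigr)=2\,g(\nabla_{X}\xi,\xi)$, whence $g(\nabla_{X}\xi,\xi)=0$. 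Comparing the two evaluations gives $X(\ln f)=0$ for all $X\in\Gamma(TF)$; as $f$ is a positive function on $F$, it must be (locally) constant, contradicting the non-constancy required for a proper warped product. Hence no such $M=F\times_{f}N_{\lambda}$ exists when $\xi$ is tangent to $N_{\lambda}$.

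I do not expect any serious analytic obstacle in this direction: the whole argument collapses to a one-line consequence of the warped-product connection formula and the unit-length normalisation of $\xi$. The only point demanding care is the base-versus-fibre bookkeeping—one must be sure that $\xi$ is genuinely a fibre direction so that Proposition \ref{propwp}(ii) applies with $Z=\xi$, while the anti-invariant and slant conditions serve only to produce the product decomposition and play no further role here. It is worth emphasising that the nearly paracosymplectic hypothesis (equivalently, that $\xi$ is Killing, Proposition \ref{killing}) is not even invoked for this case: the obstruction is purely metric, stemming solely from $g(\xi,\xi)$ being constant. The more substantive work will instead lie in Case~2, where $\xi$ is tangent to the anti-invariant factor $F$ and the Killing property of $\xi$ genuinely enters.
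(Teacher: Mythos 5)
Your proof is correct and reaches the paper's conclusion through the same basic mechanism --- Proposition \ref{propwp}(ii) forces $\nabla_{Z}\xi = Z(\ln f)\,\xi$ for base directions $Z$ once $\xi$ lies in the fibre $N_{\lambda}$ --- but you justify the vanishing of $Z(\ln f)$ by a different and cleaner route. The paper invokes Proposition \ref{killing} together with the Gauss formula to write $\nabla_{Z}\xi + h(Z,\xi) = 0$ and then compares tangential parts; this passes through the nearly paracosymplectic hypothesis and in fact silently upgrades ``$\xi$ is Killing'' to ``$\widetilde\nabla\xi = 0$'', even though only the $\xi$-component of $\nabla_{Z}\xi$ is ever needed. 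You instead pair $\nabla_{Z}\xi = Z(\ln f)\xi$ with metric compatibility and the normalisation $g(\xi,\xi)=\eta(\xi)=1$ to get $Z(\ln f) = g(\nabla_{Z}\xi,\xi) = \tfrac{1}{2}Z\bigl(g(\xi,\xi)\bigr) = 0$. This is more elementary and slightly more general: it establishes the non-existence in Case 1 for an arbitrary almost paracontact metric ambient manifold, with the nearly paracosymplectic condition (and Proposition \ref{killing}) playing no role, exactly as you observe. The only caveat, which applies equally to the paper's own argument, is that Proposition \ref{propwp}(ii) is stated for lifts of vector fields on the fibre, so one tacitly treats $\xi$ as such a lift; your base-versus-fibre bookkeeping is otherwise exactly right. (One minor imprecision: nearly paracosymplectic is not \emph{equivalent} to $\xi$ being Killing --- the latter is only a consequence --- but this does not affect your argument.)
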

\begin{proof}
From proposition \ref{killing} and Eq. \eqref{gauss} we have $\nabla_{Z}\xi+h(Z ,\xi)=0$. Comparing  tangential part and  using proposition \ref{propwp} we obtain that $Z \ln f=0 $. Thus $f$ is constant, since $Z \in \Gamma(TF)$ is non null vector field. This completes the proof.
\end{proof}

\noindent \textit{Case-$2$: When characteristic vector field $\xi$ is tangent to $F$.}

Here, we first give an example illustrating $\mathcal{P}\mathcal{R}$-anti-slant warped product submanifold $M=F\times_{f}N_{\lambda}$ of a nearly paracosymplectic manifold $\widetilde{M}(\phi,\xi,\eta,g)$ and then prove an important lemma for later use. 
\begin{example}\label{exa1}
 Let $\widetilde M=\mathbb{R}^4\times\mathbb{R}_{+}\subset\mathbb{R}^5$ be a $5$-dimensional manifold with the standard Cartesian coordinates $(x_{1}, x_{2}, y_{1}, y_{2}, t)$. Define the nearly paracosymplectic pseudo-Riemannian metric structure $(\varphi,\xi,\eta, g)$ on $\widetilde M$ by
 \begin{align}\label{strcex2}
 \varphi\left(\frac{\partial}{\partial x_{i}}\right)&=\frac{\partial}{\partial y_{i}},\,
 \varphi\left(\frac{\partial}{\partial y_{i}}\right)=\frac{\partial}{\partial x_{i}},\,
 \varphi \left(\frac{\partial}{\partial t}\right)=0,\,\xi=\frac{\partial}{\partial t},\,\eta=dt, \\
  g&=\sum(dx_{i})^{2}-\sum(dy_{i})^{2}+(dt)^{2},\ \forall i\in\{1,2\}.\nonumber
 \end{align}
Now, let $M$ is an isometrically immersed smooth submanifold in $\mathcal{R}^{5}$ defined by 
 \begin{align}\label{strucsub}
 x_{1}=v\cosh\alpha,\,x_{2}=v\cosh\beta,\, y_{1}=v\sinh\alpha,\,y_{2}=v\sinh\beta, t=u,
 \end{align}
 where $v \in \RR-\{0,1\}$.
 Then the $TM$ spanned by the vectors
 \begin{align} \label{tanbundl}
Z_{1}&=\cosh\alpha\frac{\partial}{\partial x_{1}}+\cosh\beta\frac{\partial}{\partial                                                               
      x_{2}}+\sinh\alpha\frac{\partial}{\partial y_{1}}+\sinh\beta\frac{\partial}{\partial y_{2}}, \nonumber\\
Z_{2}&=v\sinh\alpha\frac{\partial}{\partial x_{1}}+v\cosh\alpha\frac{\partial}{\partial y_{1}},\\
Z_{3}&=v\sinh\beta\frac{\partial}{\partial x_{2}} + v\cosh\beta\frac{\partial}{\partial y_{2}},\,\, Z_{4}=\frac{\partial}{\partial t}, \nonumber
 \end{align}
 where $Z_{1}, Z_{2}, Z_{3}, Z_{4}  \in \Gamma(TM)$.
 Therefore from Eq. \eqref{strcex2}, we find that
\begin{align}\label{phibundl} 
\varphi(Z_{1})&=\sinh\alpha\frac{\partial}{\partial x_{1}}+\sinh\beta\frac{\partial}{\partial        
        x_{2}}+\cosh\alpha\frac{\partial}{\partial y_{1}}+\cosh\beta\frac{\partial}{\partial                                                                  
        y_{2}}, \nonumber\\
\varphi(Z_{2})&=v\cosh\alpha\frac{\partial}{\partial x_{1}}+v\sinh\alpha\frac{\partial}{\partial y_{1}},\\
\varphi(Z_{3})&=v\cosh\beta\frac{\partial}{\partial x_{2}}+v\sinh\beta\frac{\partial}{\partial y_{2}},\,\,
\varphi(Z_{4})=0. \nonumber
\end{align}
From Eqs. \eqref{tanbundl} and \eqref{phibundl}, we obtain that $\mathfrak{D_{\lambda}}$ is a proper slant distribution given by span\{$ Z_{1}, Z_{2}, Z_{3}$\} with slant coefficent $\lambda=\frac{1}{2}$ and  $\mathfrak{D}^{\bot}$ is an anti-invariant distribution given by span\{$Z_{4}$\} with dimension not equal to zero, where $\xi = Z_{4}$ and $\varphi(Z_{4})=0$, and $\eta({Z_{4}})=1$. Therefore, $M$ is a proper $\mathcal{P}\mathcal{R}$-anti-slant submanifold of a nearly paracosymplectic manifold $\widetilde M$.
Here, the induced pseudo-Riemannian metric tensor $g$ of $M$ is given by $$g= dt^{2}+v^{2}\{\frac{2}{v^2}dv^{2}-d\alpha^{2}-d\beta^{2}\}=g_{F}+v^{2}g_{N_{\lambda}}.$$ Hence $M$ is a $4$-dimensional $\mathcal{P}\mathcal{R}$-anti-slant warped product of $\mathcal{R}^{5}$ with $f=v^{2}$. 
\end{example}
\begin{example}\label{exa2}
 Let $\widetilde M=\mathbb{R}^6\times\mathbb{R}_{+}\subset\mathbb{R}^7$ be a $7$-dimensional manifold with the standard Cartesian coordinates $(x_{1}, x_{2}, x_{3}, y_{1}, y_{2}, y_{3}, t)$. Define the nearly paracosymplectic pseudo-Riemannian metric structure $(\varphi,\xi,\eta, g)$ on $\widetilde M$ by
 \begin{align}\label{2strcex2}
 \varphi\left(\frac{\partial}{\partial x_{i}}\right)&=\frac{\partial}{\partial y_{i}},\,
 \varphi\left(\frac{\partial}{\partial y_{i}}\right)=\frac{\partial}{\partial x_{i}},\,
 \varphi \left(\frac{\partial}{\partial t}\right)=0,\,\xi=\frac{\partial}{\partial t},\,\eta=dt, \\
  g&=\sum(dx_{i})^{2}-\sum(dy_{i})^{2}+(dt)^{2},\ \forall i\in\{1,2,3\}.\nonumber
 \end{align}
Now, let $M$ is an isometrically immersed  smooth submanifold in $\mathcal{R}^{7}$ defined by 
 \begin{align}\label{2strucsub}
\chi(u,\alpha, v, t)=\left(\frac{u}{\sqrt{2}}\cosh(\alpha), u + v, v, \frac{u}{\sqrt{2}}\sinh(\alpha), k_{1}, k_{2}, t \right)
 \end{align}
 where $k_{1}, k_{2}$ are constants and $u \in \RR-\{0\}$. Then the $TM$ spanned by the vectors
 \begin{align} \label{2tanbundl}
Z_{1}&=\frac{1}{\sqrt{2}}\cosh(\alpha)\frac{\partial}{\partial x_{1}}+\frac{\partial}{\partial x_{2}}+\frac{1}
        {\sqrt{2}}\sinh(\alpha)\frac{\partial}{\partial y_{1}}, \nonumber\\
Z_{2}&=\frac{u}{\sqrt{2}}\sinh(\alpha)\frac{\partial}{\partial x_{1}}+\frac{u}{\sqrt{2}}\cosh(\alpha)\frac{\partial}{\partial y_{1}}, \\
Z_{3}&=\frac{\partial}{\partial x_{2}} + \frac{\partial}{\partial x_{3}},\,\, Z_{4}=\frac{\partial}{\partial t}, \nonumber
 \end{align}
 where $Z_{1}, Z_{2}, Z_{3}, Z_{4}  \in \Gamma(TM)$.
 Using Eq. \eqref{2strcex2}, we obtain that
\begin{align}\label{2phibundl} 
\varphi(Z_{1})&=\frac{1}{\sqrt{2}}\sinh(\alpha)\frac{\partial}{\partial x_{1}}+\frac{\partial}{\partial y_{2}}+\frac{1} 
                {\sqrt{2}}\cosh(\alpha)\frac{\partial}{\partial y_{1}}, \nonumber\\
\varphi(Z_{2})&=\frac{u}{\sqrt{2}}\cosh(\alpha)\frac{\partial}{\partial x_{1}}+\frac{u}{\sqrt{2}}\sinh(\alpha)\frac{\partial}{\partial 
                 y_{1}}, \\
\varphi(Z_{3})&=\frac{\partial}{\partial y_{2}} + \frac{\partial}{\partial y_{3}},\,\, \varphi(Z_{4})=0, \nonumber
\end{align}
From Eqs. \eqref{2tanbundl} and \eqref{2phibundl} we can find that $\mathfrak{D_{\lambda}}$ is a proper slant distribution defined by span\{$ Z_{1}, Z_{2}$\} with slant coefficient $\lambda=\frac{1}{3}$ and  $\mathfrak{D}^{\bot}$ is an anti-invariant distribution defined by span\{$Z_{3}, Z_{4}$\} with dimension not equal to zero, where $\xi = Z_{4}$ and $\eta({Z_{4}})=1$. So, $M$ turn into a proper $\mathcal{P}\mathcal{R}$-anti-slant submanifold. Here, the induced pseudo-Riemannian non-degenerate metric tensor $g$ of $M$ is specified by
$$g= dt^{2}+2dv^{2}+\frac{1}{2}u^{2}\{3/u^{2}du^{2}-d\alpha^{2}\}=g_{F}+\frac{1}{2}u^{2}g_{N_{\lambda}}.$$
Thus, $M$ is a $4$-dimensional $\mathcal{P}\mathcal{R}$-anti-slant warped product submanifold of $\mathcal{R}^{7}$ with wrapping function $f=\frac{1}{2}u^{2}$.
\end{example}
\begin{lemma}\label{lem}
If $M=F\times_{f}N_{\lambda}$ be a $\mathcal{P}\mathcal{R}$-anti-slant warped product submanifold of a nearly paracosymplectic manifold $\widetilde{M}(\varphi,\xi,\eta,g)$ then for all $X$ tangent to $N_{\lambda}$ and $Z$ tangent to $F$, we have
\begin{itemize}
\item[$(a)$] $2g(A_{ntX}X, Z)=g(A_{\varphi Z}X,tX)+ g(A_{nX}Z, tX)-(Z\ln f)\lambda\vert\vert X \vert\vert^{2}$\ and
\item[$(b)$] $g(A_{nX}Z, tX)=2g(A_{\varphi Z}X,tX)-g(A_{ntX}X, Z)$.
\end{itemize}

\end{lemma}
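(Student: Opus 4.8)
The plan is to extract both identities from the Killing condition \eqref{npcos} applied to two carefully chosen pairs of vector fields, turning covariant derivatives into shape operators via the Gauss and Weingarten formulas \eqref{gauss}, \eqref{wngrtn}, and replacing all interior connection terms by the warped-product formula of proposition \ref{propwp}. Throughout I will use three facts: since $\xi\in\Gamma(TF)$ and $X\in\Gamma(TN_{\lambda})$ are orthogonal, $\eta(X)=g(X,\xi)=0$; $\varphi Z$ is normal because $F$ is anti-invariant; and, because $tX$ is again tangent to the slant fibre $N_{\lambda}$, proposition \ref{propwp} gives $\nabla_{Z}X=\nabla_{X}Z=(Z\ln f)X$ and $\nabla_{Z}tX=(Z\ln f)tX$.

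For $(a)$ I would start from $(\widetilde\nabla_{X}\varphi)Z+(\widetilde\nabla_{Z}\varphi)X=0$. Writing $\varphi X=tX+nX$ via \eqref{phix}, \eqref{phin}, and $\widetilde\nabla_{X}Z=\widetilde\nabla_{Z}X=(Z\ln f)X+h(X,Z)$, the tangential component of this identity reduces to
\begin{align*}
A_{\varphi Z}X+A_{nX}Z+(Z\ln f)tX+2t'h(X,Z)=0.
\end{align*}
Taking the inner product with $tX$ and using the skew-symmetry \eqref{antisymphi} in the form $g(\varphi\zeta,W)=-g(\zeta,\varphi W)$ for normal $\zeta$ and tangent $W$, the term $2g(t'h(X,Z),tX)$ collapses: since $\varphi tX=\lambda X+ntX$ and $g(h(X,Z),\lambda X)=0$, it equals $-2g(h(X,Z),ntX)=-2g(A_{ntX}X,Z)$ by \eqref{shp2form}. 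Meanwhile $g(tX,tX)=\lambda g(\varphi X,\varphi X)=-\lambda\Vert X\Vert^{2}$ by proposition \ref{coro_slant} together with \eqref{metric} and $\eta(X)=0$. Collecting these terms produces exactly $(a)$.

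For $(b)$ I would run the same computation on the pair $(tX,Z)$, that is, from $(\widetilde\nabla_{tX}\varphi)Z+(\widetilde\nabla_{Z}\varphi)tX=0$, now using $\varphi tX=\lambda X+ntX$ in place of $\varphi X$. Its tangential part, paired this time with $X$, yields the companion identity
\begin{align*}
2g(A_{nX}Z,tX)=g(A_{ntX}X,Z)+g(A_{\varphi Z}X,tX)+(Z\ln f)\lambda\Vert X\Vert^{2}.
\end{align*}
Adding this to $(a)$ simultaneously cancels the warping term $(Z\ln f)\lambda\Vert X\Vert^{2}$ and the repeated shape-operator terms, leaving $g(A_{ntX}X,Z)+g(A_{nX}Z,tX)=2g(A_{\varphi Z}X,tX)$, which is precisely $(b)$.

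The routine part is the tangential/normal bookkeeping: every value of $h$ is normal, so each pairing with a tangent vector becomes a shape operator through \eqref{shp2form}, and each $\varphi$ falling on a normal vector is transferred by \eqref{antisymphi}. The step needing care, and the main obstacle, is the consistent handling of the interior connection: one must verify that $\nabla_{Z}tX=(Z\ln f)tX$ so that no spurious base contributions survive, and that $\mathrm{grad}\,f\in\Gamma(TF)$, being anti-invariant, supplies nothing to the $t$-part, together with fixing the sign $g(tX,tX)=-\lambda\Vert X\Vert^{2}$. An error in either would misplace the factor $\lambda$ or the sign of the warping term and would destroy the exact cancellation that delivers $(b)$.
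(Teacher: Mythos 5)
Your proposal is correct and follows essentially the same route as the paper: both rest on the nearly paracosymplectic symmetry $(\widetilde\nabla_X\varphi)Z=-(\widetilde\nabla_Z\varphi)X$, the Gauss--Weingarten formulas, Proposition \ref{propwp} to produce the $(Z\ln f)$ term, and the $X\leftrightarrow tX$ swap to generate the companion identity that is combined with $(a)$ to yield $(b)$ --- indeed your ``companion identity'' is precisely the paper's Eq.\ \eqref{lem10} rewritten with shape operators. The only difference is organizational: you extract one tangential vector identity $A_{\varphi Z}X+A_{nX}Z+(Z\ln f)tX+2t'h(X,Z)=0$ and pair it with $tX$ and (after the swap) with $X$, whereas the paper runs two separate scalar computations; your sign bookkeeping, in particular $g(tX,tX)=-\lambda\Vert X\Vert^{2}$ versus $g(t^{2}X,X)=\lambda\Vert X\Vert^{2}$, is consistent with Proposition \ref{coro_slant} and Eq.\ \eqref{metric}.
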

\begin{proof}
From Gauss formula and Eq. \eqref{shp2form}, we can write that
\begin{align}\label{lem1}
g(A_{ntX}X,Z)=g(\widetilde\nabla_{Z}X-\nabla_{Z}X, ntX).
\end{align}
Using Eqs. \eqref{phix}, \eqref{covphi}, definition \ref{slantdfn} and the fact that $\xi$ is orthogonal to $X$ we get
\begin{align*}
g(A_{ntX}X,Z)&=g(\widetilde\nabla_{Z}X,\varphi tX-t^{2}X)=-g(\varphi\widetilde\nabla_{Z}X,tX)-g(\widetilde\nabla_{Z}X, t^{2}X) \nonumber \\
             &=-g(\widetilde\nabla_{Z}\varphi X-(\widetilde\nabla_{Z}\varphi)X, tX)-\lambda g(\widetilde\nabla_{Z}X, X).
\end{align*}
Above equation by applying definition of nearly paracosymplectic and proposition \ref{propwp} reduced to
\begin{align}\label{lem3}
g(A_{ntX}X,Z)&=-g(\widetilde\nabla_{Z}\varphi X+(\widetilde\nabla_{X}\varphi)Z, tX)-(Z\ln f) \lambda \vert\vert X \vert\vert^{2}.
\end{align}
Therefore, again using Eqs. \eqref{phix}, \eqref{covphi} and proposition \ref{coro_slant} in \eqref{lem3}, we obtain that
\begin{align}\label{lem4}
g(A_{ntX}X,Z)&=-g(\widetilde\nabla_{Z}nX, tX)-g(\widetilde\nabla_{X}\varphi Z-\varphi\widetilde\nabla_{X} Z, tX) \nonumber \\
             &= g(\widetilde\nabla_{Z}tX, nX)-g(\widetilde\nabla_{X}\varphi Z, tX)-g(\widetilde\nabla_{X} Z, \varphi tX).
\end{align}
Employing Eqs. \eqref{phix}, \eqref{shp2form}, definition \ref{slantdfn} and proposition \ref{propwp} in equation \eqref{lem4}, we find the formula-$(a)$. 
For formula-$(b)$: we have from Eq. \eqref{shp2form} that
 \begin{align}\label{lem5}
g(A_{nX}tX,Z)=g(h(tX, Z), nX).
\end{align} 
Applying Gauss formula, Eqs. \eqref{phix} and definition \ref{slantdfn}, we find from above equation that
\begin{align}\label{lem6}
g(A_{nX}tX,Z)=-g(\varphi \widetilde\nabla_{Z}tX, X)+\lambda g(\nabla_{Z}X, X).
\end{align} 
Using proposition \ref{propwp}, Eqs. \eqref{covphi} and \eqref{npcos}   in equation \eqref{lem6}, we obtain that
\begin{align}\label{lem7}
g(A_{nX}tX,Z)=-g(\widetilde\nabla_{Z}\varphi tX, X)-g((\widetilde\nabla_{tX}\varphi) Z, X)+(Z\ln f)\lambda\vert\vert X \vert\vert^{2}.
\end{align}
Employing Eq. \eqref{covphi}, definition \ref{slantdfn} and proposition \ref{propwp} in equation \eqref{lem7}, we get
\begin{align}\label{lem8}
g(A_{nX}tX,Z)=-g(\widetilde\nabla_{Z}ntX, X)-g(\widetilde\nabla_{tX}\varphi Z, X)+g(\widetilde\nabla_{tX}Z, \varphi X).
\end{align}
Again using Eqs. \eqref{cplevi}, \eqref{phix}, \eqref{shp2form} and Gauss-Weingarten formula we achieve from equation \eqref{lem8} that
\begin{align}\label{lem9}
2g(h(tX,Z), nX)=g(h(Z, X), ntX)+g(h(tX, X), \varphi Z)-g(\nabla_{Z}tX, tX).
\end{align} 
Equation \eqref{lem9} by the virtue of proposition \ref{propwp} and Eq. \eqref{gtcos} reduced to
\begin{align}\label{lem10}
2g(h(tX,Z), nX)=g(h(Z, X), ntX)+g(h(tX, X), \varphi Z)+(Z\ln f)\lambda g(X,X).
\end{align}
Interchanging $X$ by $tX$ and using definition \ref{slantdfn}, proposition \ref{propwp} in equation \eqref{lem10}, we have
\begin{align}\label{lem11}
2g(h(X,Z), ntX)=g(h(Z, tX), nX)+g(h(tX, X), \varphi Z)-(Z\ln f)\lambda\vert\vert X \vert\vert^{2}.
\end{align}
Thus, formula-$(b)$ follows from Eqs. \eqref{lem10}, \eqref{lem11} and \eqref{shp2form}.
\end{proof}

\begin{lemma}\label{5.1}
If $M=F\times_{f}N_{\lambda}$ be a $\mathcal{P}\mathcal{R}$-anti-slant warped product submanifold of a nearly paracosymplectic manifold $\widetilde{M}(\varphi,\xi,\eta,g)$ then
\begin{align}\label{L5.1_1}
g(\mathcal{T}_{X}tX, Z)=g(A_{ntX}X, Z)-g(A_{nX}Z, tX)
\end{align}
for all $X$ is tangent to $N_{\lambda}$ and $Z$ is tangent to $F$.
\end{lemma}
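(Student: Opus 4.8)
The plan is to compute $g(\mathcal{T}_X tX, Z)$ directly from the definition of $\mathcal{T}$ as the tangential component of the covariant derivative of $\varphi$, reduce it to an expression involving only shape operators and the warping function, and then feed that expression into Lemma \ref{lem}(a).

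First I would observe that, since $Z$ is tangent to $M$ while $\mathcal{N}_X tX$ is normal, the splitting \eqref{phitn} gives $g(\mathcal{T}_X tX, Z) = g((\widetilde\nabla_X\varphi)tX, Z)$. Expanding through \eqref{covphi} I write this as $g(\widetilde\nabla_X \varphi tX, Z) - g(\varphi\widetilde\nabla_X tX, Z)$. For the first piece I use $\varphi tX = t^{2}X + ntX$ together with the slant identity $t^{2}X = \lambda X$ of Definition \ref{slantdfn}, valid because $\xi$ is tangent to $F$ so that $\eta(X)=0$ for $X$ tangent to $N_\lambda$; applying the Gauss and Weingarten formulas \eqref{gauss}, \eqref{wngrtn} and \eqref{shp2form} yields $\lambda g(\nabla_X X, Z) - g(A_{ntX}X, Z)$. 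For the second piece I move $\varphi$ across the metric via \eqref{antisymphi} and use that $F$ is anti-invariant, so $\varphi Z = nZ$ is normal; the tangential contribution drops and only $-g(A_{\varphi Z}X, tX)$ survives. Collecting the two pieces gives the intermediate identity
\[
g(\mathcal{T}_X tX, Z) = \lambda\, g(\nabla_X X, Z) - g(A_{ntX}X, Z) + g(A_{\varphi Z}X, tX).
\]

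Next I would convert the term $\lambda g(\nabla_X X, Z)$ into warping data. Since $X$ is tangent to the fibre $N_\lambda$ and $Z$ to the base $F$, Proposition \ref{propwp} gives $\nabla_X Z = (Z\ln f)X$, and differentiating the orthogonality $g(X,Z)=0$ along $X$ produces $g(\nabla_X X, Z) = -(Z\ln f)\vert\vert X \vert\vert^{2}$. Substituting, the intermediate identity becomes $g(\mathcal{T}_X tX, Z) = -\lambda(Z\ln f)\vert\vert X \vert\vert^{2} - g(A_{ntX}X, Z) + g(A_{\varphi Z}X, tX)$. Finally I recognise the combination $g(A_{\varphi Z}X, tX) - \lambda(Z\ln f)\vert\vert X \vert\vert^{2}$ as exactly the quantity isolated by Lemma \ref{lem}(a), namely $2g(A_{ntX}X,Z) - g(A_{nX}Z,tX)$; substituting this collapses the expression to $g(A_{ntX}X,Z) - g(A_{nX}Z,tX)$, which is \eqref{L5.1_1}.

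I expect the main obstacle to be the careful bookkeeping of tangential versus normal components in the first step, in particular keeping $\varphi tX = \lambda X + ntX$ and $\varphi Z = nZ$ straight and applying \eqref{shp2form} with the correct arguments, rather than any conceptual difficulty. The one genuinely load-bearing observation is that the warped-product connection formula of Proposition \ref{propwp} turns $\lambda g(\nabla_X X, Z)$ into precisely the $-\lambda(Z\ln f)\vert\vert X \vert\vert^{2}$ term that Lemma \ref{lem}(a) was designed to absorb; once that match is spotted the identity follows at once.
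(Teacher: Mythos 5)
Your proof is correct, but it takes a genuinely different route from the paper's. The paper works in the opposite direction: it expands $g(A_{nX}Z,tX)=g(h(Z,tX),nX)$ via the Gauss formula and the covariant derivative of $\varphi$ in the $Z$-direction, and then makes $g(\mathcal{T}_X tX,Z)$ appear through the nearly paracosymplectic skew-symmetry $\mathcal{T}_Z X=-\mathcal{T}_X Z$ of \eqref{tantan} combined with the metric skew-symmetry \eqref{P7}; the two $(Z\ln f)\lambda\Vert X\Vert^{2}$ terms cancel internally and the identity drops out with no appeal to Lemma \ref{lem}. You instead expand $g(\mathcal{T}_X tX,Z)$ directly in the $X$-direction, obtaining the intermediate identity $g(\mathcal{T}_X tX,Z)=-\lambda(Z\ln f)\Vert X\Vert^{2}-g(A_{ntX}X,Z)+g(A_{\varphi Z}X,tX)$, which is valid on any almost paracontact metric ambient; the nearly paracosymplectic hypothesis enters only when you invoke Lemma \ref{lem}(a) to eliminate $g(A_{\varphi Z}X,tX)$ together with the warping term. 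This is legitimate and non-circular, since Lemma \ref{lem} is established independently beforehand; your version buys a reusable general identity and makes the interaction with Lemma \ref{lem} explicit, while the paper's version is self-contained. One cosmetic remark: your prose says the second piece contributes $-g(A_{\varphi Z}X,tX)$, whereas your displayed intermediate identity correctly carries $+g(A_{\varphi Z}X,tX)$; the display has the right sign, since $-g(\varphi\widetilde\nabla_X tX,Z)=g(\widetilde\nabla_X tX,\varphi Z)=g(A_{\varphi Z}tX,X)$ by \eqref{antisymphi} and \eqref{shp2form}.
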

\begin{proof}
By virtue of Eqs. \eqref{gauss}, \eqref{phix} and \eqref{covphi} we obtain that
\begin{align*}
g(A_{nX}Z, tX)=g(tX,\nabla_{Z}tX)-g(tX, (\widetilde\nabla_{Z\varphi})X)-g(tX,\varphi \widetilde\nabla_{Z}X).
\end{align*}
By use of Eqs. \eqref{antisymphi}, \eqref{phitn} and proposition \ref{propwp}, the above equation reduced to
\begin{align}\label{L5.1extra}
g(A_{nX}Z, tX)=(Z\ln f)g(tX,tX)-g(tX,\mathcal{T}_{Z}X)+g(\varphi(tX), \widetilde\nabla_{Z}X).
\end{align}
Employing Eqs. \eqref{metric}, \eqref{tantan}, \eqref{gtcos} and Gauss formula in Eq. \eqref{L5.1extra} we have
\begin{align}\label{L5.1extra1}
g(A_{nX}Z, tX)=&-(Z\ln f)\lambda g(X,X)+g(tX,\mathcal{T}_{X}Z)+g(t^{2}X, \nabla_{Z}X)\nonumber\\ &+g(A_{ntX}X, Z).
\end{align}
In light of proposition \ref{propwp}, Eq. \eqref{P7} and definition of slant submaifold, Eq. \eqref{L5.1extra1} yields Eq. \eqref{L5.1_1}. This completes the proof.
\end{proof}
\noindent Now, we prove:
\begin{theorem}\label{thrm5.1}
Let  $M=F\times_{f}N_{\lambda}$ is a $\mathcal{P}\mathcal{R}$-anti-slant warped product submanifold of a nearly paracosymplectic manifold $\widetilde{M}(\varphi,\xi,\eta,g)$ with $\xi \in \Gamma(TF)$. Then $M$ is a $\mathcal{P}\mathcal{R}$-anti-slant product if and only if $\mathcal{T}_{X}tX$ is tangent to $N_{\lambda}$, for all $X \in \Gamma(TN_{\lambda})$ and $Z \in \Gamma(TF)$.
\end{theorem}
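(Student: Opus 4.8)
The plan is to reduce the geometric condition to a single algebraic relation among shape operators and then play the three linear identities at my disposal against one another. Since $\mathcal{T}_{X}tX$ is automatically a section of $TM=\Gamma(TF)\oplus\Gamma(TN_{\lambda})$, saying that it is tangent to $N_{\lambda}$ is the same as saying $g(\mathcal{T}_{X}tX,Z)=0$ for every $Z$ tangent to $F$. By Lemma \ref{5.1} this is precisely the relation $g(A_{ntX}X,Z)=g(A_{nX}Z,tX)$, which for brevity I will call $(\star)$. Recalling that a $\mathcal{P}\mathcal{R}$-anti-slant product is by definition the case in which $f$ is constant, the entire theorem becomes the statement that $(\star)$ holds for all admissible $X,Z$ if and only if $f$ is constant. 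Thus the only ingredients I need are Lemma \ref{lem} (parts (a) and (b)) and Lemma \ref{5.1}; no further differential geometry enters.

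For the forward implication I would assume $(\star)$ and feed it into both parts of Lemma \ref{lem}. Substituting $(\star)$ into part (b) collapses that identity to $g(A_{ntX}X,Z)=g(A_{\varphi Z}X,tX)$, while substituting $(\star)$ into part (a) instead yields $g(A_{ntX}X,Z)=g(A_{\varphi Z}X,tX)-(Z\ln f)\lambda\vert\vert X\vert\vert^{2}$. Comparing these two expressions for the same quantity forces $(Z\ln f)\lambda\vert\vert X\vert\vert^{2}=0$. Here is the one place the hypotheses are used in an essential way: because the submanifold is \emph{proper} we have $\lambda\neq0$, and because the induced metric is non-degenerate I may select $X$ with $\vert\vert X\vert\vert^{2}\neq0$; hence $Z\ln f=0$ for every $Z\in\Gamma(TF)$, so $f$ is constant and $M$ is a $\mathcal{P}\mathcal{R}$-anti-slant product.

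For the converse I would set $f$ constant, so $Z\ln f=0$, which trims Lemma \ref{lem}(a) to $2g(A_{ntX}X,Z)=g(A_{\varphi Z}X,tX)+g(A_{nX}Z,tX)$; eliminating the common term $g(A_{\varphi Z}X,tX)$ between this and Lemma \ref{lem}(b) returns exactly $(\star)$, and Lemma \ref{5.1} then gives $g(\mathcal{T}_{X}tX,Z)=0$, i.e. $\mathcal{T}_{X}tX$ is tangent to $N_{\lambda}$. The only delicate point throughout is bookkeeping: the relation $(\star)$ and parts (a),(b) of Lemma \ref{lem} are three linear equations in the three scalars $g(A_{ntX}X,Z)$, $g(A_{nX}Z,tX)$, and $g(A_{\varphi Z}X,tX)$, and one must combine them in the right order so that the warping-function term $(Z\ln f)\lambda\vert\vert X\vert\vert^{2}$ is isolated cleanly. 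There is no genuine geometric obstacle beyond recognizing that tangency to $N_{\lambda}$ is detected by pairing against arbitrary $Z\in\Gamma(TF)$, and that $\lambda\neq0$ together with non-degeneracy of $g$ is exactly what permits the final cancellation.
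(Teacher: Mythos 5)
Your proof is correct. It pivots on the same two facts as the paper --- Lemma \ref{5.1} to convert tangency of $\mathcal{T}_{X}tX$ to $N_{\lambda}$ into the scalar relation $(\star)$, and the reduction of the whole theorem to the vanishing of $(Z\ln f)\lambda\vert\vert X\vert\vert^{2}$ --- but it reaches that point by a different and leaner route. The paper does not reuse Lemma \ref{lem}: it recomputes $g(A_{nZ}X,tX)$ from scratch in two independent ways (Eqs. \eqref{T5.1_1} and \eqref{T5.1_2}, the second obtained from the first by substituting $tX$ for $X$) and only then feeds the pair into Lemma \ref{5.1} to land on $g(\mathcal{T}_{X}tX,Z)=\frac{2}{3}(Z\ln f)\lambda\vert\vert X\vert\vert^{2}$, from which the equivalence is read off. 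You instead treat Lemma \ref{lem}(a), Lemma \ref{lem}(b) and Lemma \ref{5.1} as three linear relations among the scalars $g(A_{ntX}X,Z)$, $g(A_{nX}Z,tX)$ and $g(A_{\varphi Z}X,tX)$ and eliminate; carried out as a single identity rather than as two separate implications, your elimination gives $g(\mathcal{T}_{X}tX,Z)=-\frac{2}{3}(Z\ln f)\lambda\vert\vert X\vert\vert^{2}$, and your closing step is exactly the right one (properness gives $\lambda\neq 0$, non-degeneracy supplies a non-null $X$, and non-degeneracy of the induced metric on $F$ is what makes pairing against all $Z\in\Gamma(TF)$ detect the $TF$-component of $\mathcal{T}_{X}tX$). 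What your route buys is economy --- no fresh covariant-derivative computation is required --- and it incidentally exposes a sign inconsistency in the paper: your constant $-\frac{2}{3}$ against the paper's $+\frac{2}{3}$ in Eq. \eqref{T5.1_3} shows that one of the displayed identities among Lemma \ref{lem}, Lemma \ref{5.1} and the theorem's proof carries a sign slip. That discrepancy is harmless for the present statement, since only the vanishing of $g(\mathcal{T}_{X}tX,Z)$ is at stake, but it would matter in any quantitative use of the formula.
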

\begin{proof}
From Eqs. \eqref{antisymphi}, \eqref{gauss}, \eqref{shp2form}, \eqref{covphi} and \eqref{phitn}, we have
\begin{align*}
g(A_{nZ}X, tX)&=g(\widetilde\nabla_{tX}X,\varphi Z)=g(\mathcal{T}_{tX}X, Z)+ g(\varphi X, \widetilde\nabla_{tX}Z). 
\end{align*} 
From Gauss fromula and Eqs. \eqref{phix}, \eqref{tantan}, we obtain that
$$g(A_{nZ}X, tX)=-g(\mathcal{T}_{X}tX, Z)+g(tX, \nabla_{tX}Z)+g(A_{nX}tX, Z).$$
Then, using Eq. \eqref{gtcos} and proposition \ref{propwp}, we get
\begin{align}\label{T5.1_1}
g(A_{nZ}X, tX)=-(Z\ln f)\lambda g(X,X)-g(\mathcal{T}_{X}tX, Z)+g(A_{nX}tX, Z).
\end{align}
Replacing $X$ by $tX$ and, using Eqs. \eqref{P7}, \eqref{tantan}, definition of slant submaifold,   
\eqref{gtcos} in Eq. \eqref{T5.1_1}, we arrive at
\begin{align}\label{T5.1_2}
g(A_{nZ}X, tX)=(Z\ln f)\lambda\vert\vert X \vert\vert^{2}+g(\mathcal{T}_{X}tX, Z)+g(A_{ntX}X, Z).
\end{align}
Thus, from Eqs. \eqref{T5.1_1}, \eqref{T5.1_2} and lemma \eqref{L5.1_1}, we derive
\begin{align}\label{T5.1_3}
g(\mathcal{T}_{X}tX, Z)=\frac{2}{3}(Z\ln f)\lambda\vert\vert X \vert\vert^{2}.
\end{align}
Eq. \eqref{T5.1_3} implies that, $(Z\ln f)=0$ if and only if $\mathcal{T}_{X}tX$ is tangent to $N_{\lambda}.$ Since, $X,\ Z$ are non-null vector fields and the fact that $N_{\lambda}$ is proper slant submanifold. This completes the proof.
\end{proof}

\begin{theorem}\label{thm1}
Let $M=F\times_{f}N_{\lambda}$ be a mixed totally geodesic $\mathcal{P}\mathcal{R}$-anti-slant warped product submanifold of a nearly paracosymplectic manifold $\widetilde{M}(\varphi,\xi,\eta,g)$ with $\xi \in \Gamma(TF)$. Then $M$ is a $\mathcal{P}\mathcal{R}$-anti-slant warped product submanifold, for any  $X \in \Gamma(TN_{\lambda})$ and $Z \in \Gamma(TF)$.
\end{theorem}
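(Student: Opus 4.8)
The plan is to run the argument entirely through the two structural identities already available, namely the pairing \eqref{L5.1_1} of Lemma \ref{5.1} and the scalar relation \eqref{T5.1_3} extracted inside the proof of Theorem \ref{thrm5.1}, and to show that the mixed totally geodesic hypothesis kills the right-hand side of \eqref{L5.1_1}, forcing $Z\ln f=0$. Since we are in Case-$2$, where $\xi\in\Gamma(TF)$, the decomposition of Definition of a $\mathcal{P}\mathcal{R}$-anti-slant submanifold specializes to $TN_{\lambda}=\mathfrak{D}_{\lambda}$ and $TF=\mathfrak{D}^{\bot}\oplus\langle\xi\rangle$, so the assumption $h(X,Z)=0$ of a mixed totally geodesic submanifold holds for every $X\in\Gamma(TN_{\lambda})$ and every $Z\in\Gamma(TF)$.

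First I would rewrite the two shape-operator terms in \eqref{L5.1_1} through the defining relation \eqref{shp2form}, as $g(A_{ntX}X,Z)=g(h(X,Z),ntX)$ and $g(A_{nX}Z,tX)=g(h(Z,tX),nX)$. The one delicate point is that the second pairing involves $h(Z,tX)$ rather than $h(Z,X)$, so I must first check that $tX$ is again tangent to $N_{\lambda}$. This follows because $t$ annihilates $\mathfrak{D}^{\bot}\oplus\langle\xi\rangle$ (the former is anti-invariant so $\varphi$ maps it into the normal bundle, and $t\xi=0$ by \eqref{phixi}), and $t$ is antisymmetric by \eqref{antixty}; hence for $X\in\mathfrak{D}_{\lambda}$ and $W\in\mathfrak{D}^{\bot}\oplus\langle\xi\rangle$ one gets $g(tX,W)=-g(X,tW)=0$, so $tX\in\mathfrak{D}_{\lambda}=\Gamma(TN_{\lambda})$. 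Consequently $h(X,Z)$ and $h(Z,tX)$ are both mixed values of the second fundamental form, both vanish by hypothesis, and \eqref{L5.1_1} collapses to $g(\mathcal{T}_{X}tX,Z)=0$.

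Finally I would substitute this into \eqref{T5.1_3}, which is a general identity for any $\mathcal{P}\mathcal{R}$-anti-slant warped product since its derivation in Theorem \ref{thrm5.1} never invoked the mixed totally geodesic condition. This yields $\tfrac{2}{3}(Z\ln f)\lambda\vert\vert X\vert\vert^{2}=0$ for all $X\in\Gamma(TN_{\lambda})$ and $Z\in\Gamma(TF)$. Properness gives $\lambda\neq 0$, and since $X$ and $Z$ are non-null this forces $Z\ln f=0$ for every $Z$ tangent to $F$, i.e.\ $f$ is constant; by the Definition of a $\mathcal{P}\mathcal{R}$-anti-slant warped product the warping is then trivial, so $M$ reduces to a $\mathcal{P}\mathcal{R}$-anti-slant product (this is presumably the intended reading of the statement). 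I expect no genuine obstacle beyond bookkeeping: the two substantive points are verifying that the discarded terms are truly mixed—which rests on the $t$-invariance of $\mathfrak{D}_{\lambda}$ established above—and recognizing that \eqref{T5.1_3} may be quoted verbatim without the mixed hypothesis.
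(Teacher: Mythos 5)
Your computations are internally sound relative to the paper's lemmas, but be aware of two things: (1) your route is not the one the paper takes, and (2) what you end up proving is not the literal statement of the theorem --- under the paper's own definitions it is closer to its negation. The paper's proof never touches Lemma \ref{5.1} or the identity \eqref{T5.1_3}. Instead it eliminates $g(A_{nX}Z,tX)$ between formulas $(a)$ and $(b)$ of Lemma \ref{lem} to get $3g(A_{ntX}X,Z)=3g(A_{\varphi Z}X,tX)-(Z\ln f)\lambda\vert\vert X\vert\vert^{2}$, uses the mixed totally geodesic hypothesis to annihilate only the single term $g(A_{ntX}X,Z)=g(h(X,Z),ntX)$, and stops at the identity \eqref{thm1-3}; it draws no conclusion about $f$ at all. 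Your argument kills \emph{both} shape-operator terms in \eqref{L5.1_1} (your check that $t\mathfrak{D}_{\lambda}\subseteq\mathfrak{D}_{\lambda}$, so that $g(h(Z,tX),nX)$ really is a mixed value of $h$, is correct and is the step that needs justifying), feeds $g(\mathcal{T}_{X}tX,Z)=0$ into \eqref{T5.1_3} --- which, as you say, is derived in Theorem \ref{thrm5.1} without the mixed hypothesis --- and concludes $Z\ln f=0$. Since the paper's definition of a $\mathcal{P}\mathcal{R}$-anti-slant \emph{warped} product requires $f$ non-constant, your conclusion is a non-existence/triviality result: a mixed totally geodesic $\mathcal{P}\mathcal{R}$-anti-slant warped product must degenerate to a $\mathcal{P}\mathcal{R}$-anti-slant product. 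That is not what the theorem asserts, and it is not what \eqref{thm1-3} asserts either.

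The discrepancy is not an error on your side so much as an inconsistency you have exposed in the source: applying the mixed hypothesis fully to the paper's own Lemma \ref{lem} (formula $(b)$ gives $g(A_{\varphi Z}X,tX)=0$ once both mixed terms vanish) combined with \eqref{thm1-3} forces $Z\ln f=0$ by exactly the same mechanism, so the paper's formula \eqref{thm1-3} cannot coexist with a non-constant $f$ under the stated hypotheses. The theorem's conclusion as printed merely restates its hypothesis, and its proof terminates at a formula rather than at any such conclusion, so there is no well-posed target to certify your proposal against. You flagged the ambiguity, which is right, but you should state explicitly that what you have written is a proof of triviality of the warping function, not a proof of the theorem as stated, and that it does not reproduce the paper's argument.
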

\begin{proof}
From formula-$(a)$ and formula-$(b)$ of lemma \ref{lem}, we obtain that
\begin{align}\label{thm1-1}
3g(A_{ntX}X, Z)= 3g(A_{\varphi Z}X, tX)-(Z\ln f)\lambda\vert\vert X \vert\vert^{2}.
\end{align}
Employing Eq. \eqref{shp2form} in \eqref{thm1-1} we find that
\begin{align*}
3g(h(X, Z), ntX)= 3g(A_{\varphi Z}X, tX)-(Z\ln f)\lambda\vert\vert X \vert\vert^{2}.
\end{align*}
Since $M$ is mixed totally geodesic, so we achieve from above equation that
\begin{align}\label{thm1-3}
g(A_{\varphi Z}X, tX)=\frac{1}{3}\{ (Z\ln f)\lambda\vert\vert X \vert\vert^{2} \}.
\end{align}
This completes the proof of the theorem. 
\end{proof}

\begin{theorem}\label{thm2}
 Let $M\to \widetilde{M}$ be an isometric immersion of a $\mathcal{P}\mathcal{R}$-anti-slant submanifold $M$ into a nearly paracosymplectic manifold $\widetilde{M}(\varphi,\xi,\eta,g)$. Then $M$ is locally $\mathcal{P}\mathcal{R}$-anti-slant warped product submanifold $M$ of the form $F\times_f N_{\lambda}$ if and only if shape operator of $M$ satisfies
\begin{align}\label{anti-shpcond}
A_{ntX}Z-A_{\varphi Z}tX = -\frac{1}{3}(\lambda)Z(\mu)X, \,\forall \, Z, W \in \Gamma(\mathfrak{D}^{\bot}\oplus\xi), X \in \Gamma(\mathfrak{D}_{\lambda}), 
\end{align}
for some function $\mu$ on $M$ such that $Y(\mu)=0$, $ Y \in \Gamma(\mathfrak{D}_{\lambda})$.
\end{theorem}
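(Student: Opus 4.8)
The plan is to read this as a Chen-type characterization and split it into the two implications, using for the converse the classical decomposition theorem of Hiepko (a Riemannian manifold carrying an autoparallel distribution whose orthogonal complement is a spherical distribution is locally a warped product). Throughout I would work in Case~2, $\xi\in\Gamma(TF)$, since Proposition~\ref{prop5.1} rules out $\xi\in\Gamma(TN_\lambda)$; accordingly I identify $\mathfrak{D}^{\bot}\oplus\langle\xi\rangle$ with the tangent bundle of the (anti-invariant) base $F$ and $\mathfrak{D}_\lambda$ with that of the (proper slant) fibre $N_\lambda$, and I set $\mu=\ln f$.

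For the ``only if'' implication I would assume $M=F\times_f N_\lambda$ and run the computations already packaged in Lemmas~\ref{lem} and~\ref{5.1}. Combining formula~$(a)$ and formula~$(b)$ of Lemma~\ref{lem} exactly as in the derivation of \eqref{thm1-1} gives $3g(A_{ntX}X,Z)=3g(A_{\varphi Z}X,tX)-(Z\ln f)\lambda\lVert X\rVert^{2}$; since $f$ is a function on the base, $Y(\mu)=0$ for every $Y\in\Gamma(\mathfrak{D}_\lambda)$, and the self-adjointness of the shape operator rewrites this as $g\!\left(A_{ntX}Z-A_{\varphi Z}tX,\,X\right)=-\tfrac13\lambda\,Z(\mu)\,g(X,X)$. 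Polarising in $X$ (replacing $X$ by $X+Y$ and using the linearity of $t$ and $n$) and then invoking the symmetry of the bilinear form $(X,Y)\mapsto g(A_{ntX}Z-A_{\varphi Z}tX,Y)$ on $\mathfrak{D}_\lambda$---which I would extract from the warped-product relations of Proposition~\ref{propwp} together with \eqref{tantan}---upgrades this diagonal identity to the full vector identity \eqref{anti-shpcond}.

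For the converse I would assume \eqref{anti-shpcond} and reconstruct the warped-product structure distribution by distribution. Contracting \eqref{anti-shpcond} with $W\in\Gamma(\mathfrak{D}^{\bot}\oplus\langle\xi\rangle)$ kills the right-hand side by orthogonality, yielding $g(A_{ntX}Z,W)=g(A_{\varphi Z}tX,W)$; symmetrising in $Z,W$ via \eqref{shp2form} reproduces precisely the criterion \eqref{2}, so by Theorem~\ref{pranti-thm2} the distribution $\mathfrak{D}^{\bot}\oplus\langle\xi\rangle$ is autoparallel and its leaves $F$ are totally geodesic. Contracting instead with $Y\in\Gamma(\mathfrak{D}_\lambda)$ and symmetrising in $X,Y$ produces the right-hand side of \eqref{1} in the closed form $-\tfrac23\lambda\,Z(\mu)\,g(X,Y)$; feeding this into Theorem~\ref{pranti-thm1} is meant to give the integrability of $\mathfrak{D}_\lambda$ together with $g(\widetilde\nabla_X Y,Z)=-\tfrac13 Z(\mu)g(X,Y)$, i.e. total umbilicity of the leaves $N_\lambda$ in $M$ with mean-curvature vector $-\tfrac13\,\mathrm{grad}\,\mu$.

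The final step is to observe that $Y(\mu)=0$ for $Y\in\Gamma(\mathfrak{D}_\lambda)$ forces $\mathrm{grad}\,\mu$ to lie in $\mathfrak{D}^{\bot}\oplus\langle\xi\rangle$ and to be parallel in the normal bundle of each leaf along $\mathfrak{D}_\lambda$, so that $\mathfrak{D}_\lambda$ is a spherical distribution; Hiepko's theorem then realises $M$ locally as $F\times_f N_\lambda$ with $f=e^{\mu}$, the $\mathcal{PR}$-anti-slant type of the two factors being inherited from the defining distributions. I expect the main obstacle to be exactly the verification that $\mathfrak{D}_\lambda$ is spherical rather than merely umbilical: the identity underlying Theorem~\ref{pranti-thm1} controls only the symmetric combination $g(\widetilde\nabla_X Y,Z)+g(\widetilde\nabla_Y X,Z)$, so one must independently supply the antisymmetric (integrability) part $g([X,Y],Z)=0$ and then pin down the warping function, and it is precisely here that the full strength of the vector identity \eqref{anti-shpcond}---beyond its contraction with $X$---has to be brought to bear.
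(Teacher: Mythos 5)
Your proposal follows essentially the same route as the paper: the forward implication via formulas $(a)$ and $(b)$ of Lemma \ref{lem}, and the converse by contracting \eqref{anti-shpcond} with $W$ to invoke Theorem \ref{pranti-thm2} (totally geodesic foliation of $\mathfrak{D}^{\bot}\oplus\langle\xi\rangle$), contracting with $Y$ to invoke Theorem \ref{pranti-thm1} and obtain $h_{\lambda}(X,Y)=-\tfrac13 g(X,Y)\nabla\mu$, and finishing with Hiepko's theorem. The two points you flag as needing extra care --- polarising the diagonal identity from Lemma \ref{lem} into the full vector identity, and upgrading umbilicity to sphericality of $\mathfrak{D}_{\lambda}$ before applying Hiepko --- are precisely the steps the paper passes over in silence, so your plan is, if anything, a more honest account of the same argument.
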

\begin{proof}
If $M$ is a $\mathcal{P}\mathcal{R}$-anti-slant warped product submanifold of a nearly paracosymplectic manifold $\widetilde{M}(\varphi,\xi,\eta,g)$. Then from formula-$(a)$ and formula-$(b)$ of lemma \ref{lem}, we derive Eq. \eqref{anti-shpcond}. Since $f$ is a function on $F$, setting $\mu = \ln f$  implies that $Y(\mu)=0$.
Conversely, let us assume that $M$ is $\mathcal{P}\mathcal{R}$-anti-slant submanifold of $\widetilde{M}(\varphi,\xi,\eta,g)$ such that Eq. \eqref{anti-shpcond} holds. Taking inner product of Eq. \eqref{anti-shpcond} with $W$ and from theorem \ref{pranti-thm2} we conclude that the integral manifold $F$ of $(\mathfrak{D}^{\bot}\oplus\xi)$ defines totally geodesic foliation in  $M$. Then by theorem \ref{pranti-thm1}, we find that the distribution $\mathfrak{D}_{\lambda}$ is integrable if and only if
\begin{align}\label{antiwp-thm1_0}
2\lambda g(\widetilde\nabla_{X}Y,Z)=g(A_{ntY}X,Z)+g(A_{ntX}Y,Z)-g(A_{\varphi Z}tY,X)-g(A_{\varphi Z}tX,Y).
\end{align}
From  Eq. \eqref{shp2form} and fact that $h$ is symmetric in  Eq. \eqref{antiwp-thm1_0}, we arrive at
\begin{align}\label{antiwp-thm1_1}
2\lambda g(\widetilde\nabla_{X}Y,Z)=g(A_{ntY}Z-A_{\varphi Z}tY,X)+g(A_{ntX}Z-A_{\varphi Z}tX,Y)
\end{align}
Eq. \eqref{anti-shpcond} yields by taking inner product with $Y$ that
\begin{align}\label{antiwp-thm1_11}
g(A_{ntX}Z-A_{\varphi Z}tX,Y)=-\frac{1}{3}g(\lambda Z(\mu)X,Y).
\end{align}
Now interchanging $X$ by $Y$ in Eq. \eqref{anti-shpcond} and taking inner product with $X$, we obtain that
\begin{align}\label{antiwp-thm1_111} 
g(A_{ntY}Z-A_{\varphi Z}tY,X)=-\frac{1}{3}g(\lambda Z(\mu)Y,X).
\end{align}
From Eqs. \eqref{antiwp-thm1_1}-\eqref{antiwp-thm1_111} and symmetry of $h$, we conclude that
\begin{align*}
g(h_{\lambda}(X, Y),Z) = -\frac{1}{3}g(Z(\mu)X, Y)=-\frac{1}{3}g(X, Y )g(\nabla{\mu}, Z).
\end{align*}
This implies $h_{\lambda}(X, Y)=-\frac{1}{3}g(X, Y )\nabla{\mu}$, where $h_{\lambda}$ is a second fundamental form of $\mathfrak{D}_{\lambda}$  in $M$ and  $\nabla\mu$ is gradient of $\mu = \ln{f}$. Hence, the integrable manifold of $\mathfrak{D}_{\lambda}$ is totally umbilical submanifold in $M$ and its mean curvature is non-zero and parallel and $Y(\mu)=0$ for all $Y \in \Gamma(\mathfrak{D}_{\lambda})$. Thus, by \cite{SH} we achieve that $M$ is a $\mathcal{P}\mathcal{R}$-anti-slant warped product submanifold of a nearly paracosymplectic manifold $\widetilde{M}(\varphi,\xi,\eta,g)$. This completes the proof of the theorem.  
\end{proof}
\acknowledgement{The authors are grateful to the anonymous reviewer for careful reading of the manuscript, constructive criticism and several valuable suggestions that improved the presentation of the work.}

\end{document}